\newtheorem{thm}{Theorem}[section]
\newtheorem{cor}[thm]{Corollary}
\newtheorem{lem}[thm]{Lemma}
\newtheorem{prop}[thm]{Proposition}
\theoremstyle{definition}
\newtheorem{defn}[thm]{Definition}
\newtheorem{rem}[thm]{Remark}
\numberwithin{equation}{section}
\DeclareMathOperator{\ima}{{Im}}
\DeclareMathOperator{\End}{{End}}
\DeclareMathOperator{\Sp}{Sp}
\newcommand{\cW}{\mathcal{W}}
\newcommand{\QQ}{\mathbb Q}
\newcommand{\NN}{\mathbb N}
\newcommand{\ZZ}{\mathbb Z}
\newcommand{\CC}{\mathbb C}
\begin{document}
\bibliographystyle{amsplain}
\title[Dihedral actions on JX]{Decomposition of Jacobian varieties of curves with dihedral actions via equisymmetric stratification.}
\author{Milagros Izquierdo$^1$, Leslie Jim\'enez$^1$ and Anita Rojas$^2$}

\address{1\\Matimatiska Institutionen, Link\"{o}pings Universitet, Link\"{o}ping\\Sweden}
\email{milagros.izquierdo@liu.se, leslie.jimenez@liu.se}
\address{2\\Departamento de Matem\'aticas, Universidad de Chile, Santiago\\Chile}
\email{anirojas@uchile.cl}

\thanks{Partially supported by Fondecyt Grant 1140507, Conicyt PIA ACT1415 and Becas Chile Fellowship for Postdoctoral studies}

\subjclass{Primary 14H40; Secondary 14H30}

\keywords{Group algebra decomposition, Jacobians with group action, compact Riemann surfaces}


\begin{abstract} 
Given a compact Riemann surface $X$ with an action of a finite group $G$, the group algebra $\QQ[G]$ provides an isogenous decomposition of its Jacobian variety $JX$, known as the group algebra decomposition of $JX$. We consider the set of equisymmetric Riemann surfaces $\mathcal{M}(2n-1, D_{2n}, \theta)$ for all $n\geq 2$. We study the group algebra decomposition of the Jacobian $JX$ of every curve $X\in \mathcal{M}(2n-1, D_{2n},\theta)$ for all admissible actions, and we provide affine models for them. We use the topological equivalence of actions on the curves to obtain facts regarding its Jacobians. We describe some of the factors of $JX$ as Jacobian (or Prym) varieties of intermediate coverings. Finally, we compute the dimension of the corresponding Shimura domains.
\end{abstract}

\maketitle

\section{Introduction}

Let $G$ be a finite group acting on a given compact Riemann surface $X$ of genus $g\geq 2$. This action induces a homomorphism $\rho:\QQ[G]\to \text{End}_{\QQ}(JX)$ from the rational group algebra $\QQ[G]$ into the rational endomorphism algebra of the Jacobian variety $JX$ of $X$ in a natural way. The factorization of $\QQ[G]$ into a product of simple algebras yields a decomposition of $JX$ into abelian subvarieties \cite{l-r}, \cite{kr}, called \textit{the group algebra decomposition of the Jacobian variety}.  

Jacobians with group action, and in particular the group algebra decomposition, have been extensively studied from different points of view in \cite{accola1},  \cite{brr},  \cite{cr}, \cite{serre}, \cite{yo}, \cite{kr}, \cite{lro}, \cite{l-r}, \cite{paulhus}, \cite{paulhus2}, \cite{sevin}, \cite{reci-ro}, \cite{rubi}, \cite{yoibero}. Regarding dihedral groups acting on Jacobians,  studies of this can be found in \cite{crro}, from an algebraic point of view, and for particular actions on abelian varieties in \cite{lrro}. 

Since the group algebra decomposition of $JX$ comes from algebraic data of the group $G$ -its irreducible representations \cite{cr, l-r}- this decomposition has been studied mostly, and for many years, from an algebraic point of view. However, further information about the decomposition, such as the dimensions and the polarizations of the factors, depends on the {\it geometry} of the action of $G$ on $X$. For instance, the dimension of the factors is explicitly given in terms of the monodromy of the action \cite{yoibero}. Due to this dependence, it comes the general question about how the choice of the action of $G$ on $X$ affects the group algebra decomposition of $JX$. Very little is known about this question. In this work, we propose an approach through equisymmetric stratification (topological equivalence of actions).  

Consider the equisymmetric stratification $\mathcal{M}_g = \cup \mathcal{M}(g,G,\theta) $ of the moduli space of Riemann surfaces of  genus $g$ defined in \cite{bro}. We use the equisymmetric stratification to obtain facts regarding Jacobian varieties of surfaces.  In particular, we study the group algebra decomposition of the Jacobian variety of a curve $X \in \mathcal{M}(2n - 1, D_{2n}, \theta)$, where $\mathcal{M}(2n - 1, D_{2n}, \theta)$ is the  equisymmetric stratum of  curves of genus $2n - 1$ with a $D_{2n}-$action and epimorphism $\theta: \Delta\to G$ determining an action $\sigma$ of $D_{2n}$ on $X$ with signature $(0; 2, 2, 2, 2, n)$. In other words, if $X, Y$ belong to the same stratum $\mathcal{M}(2n - 1, D_{2n}, \theta)$ then the group algebra decomposition of $JX$ and $JY$ are the same up to a permutation of the factors. On the other hand, if $X$ and $Y$ belong to different strata, then the decompositions are completely different (see section \ref{S:decomposition}). Besides, using the method introduced in \cite{yo} we describe the group algebra decomposition of the corresponding Jacobian varieties in terms of Jacobian (or prym) varieties of intermediate coverings. This result refines what was obtained in \cite{crro} for the action we study.


In section \ref{prelimi} we introduce some preliminary concepts. In section \ref{S:strata}, we study the actions and describe the corresponding equisymmetric strata. In section \ref{S:equations} we find equations describing the curves in these strata as affine plane curves. In sections \ref{S:decomposition} and \ref{S:shimura} we study the corresponding Jacobians and their loci in the moduli space of principally polarized abelian varieties. We describe the group algebra decomposition of the Jacobians, and we compute the dimension of the Shimura domain corresponding to each action.

\vspace{0.5cm}
\noindent{\bf{Acknowledgments}} The second and third authors are very grateful to the Department of Mathematics of Link\"{o}ping University, where this paper was written, for its hospitality and the kindness of all its people during their postdoctoral and research stay respectively. 


\section{Preliminaries}\label{prelimi}

A compact Riemann surface corresponds to a smooth irreducible projective algebraic curve over $\mathbb C$, we say \lq\lq curve\rq\rq along this work referring to a compact Riemann surface. Any curve $X$ of genus $g$ has  associated a principally polarized abelian variety $JX:=H^{1,0}(X,\CC)^*/H_1(X, \ZZ)$, where $H^{1,0}(X,\CC)^*$ is the dual of the complex vector space of holomorphic forms of $X$, and  $H_1(X, \ZZ)$ is the first homology group of $X$. This variety is called \textit{the Jacobian variety of $X$} and has complex dimension $g$. For references about abelian varieties and Jacobians see for instance \cite{bl} and \cite{rubi}.

We define the group of automorphisms $\text{Aut}(X)$ of a Riemann surface $X$ as the analytical automorphism group of $X$. We say that a finite group $G$ acts on $X$ if there is a monomorphism $\sigma: G\to \text{Aut}(X)$.

The quotient $X/G$ (the space of orbits of the action of $G$ on $X$) corresponds to a compact Riemann orbifold, we denote its genus by $\gamma=g(X/G)$. If $X$ is uniformized by a surface Fuchsian group $\Gamma$, i.e. $X=\mathbb{H}/\Gamma$, then $X/G=\mathbb H/\Delta$. The canonical presentation of $\Delta$ is given by 
\begin{equation}\label{delta}
\Delta=\langle \alpha_1, \beta_1, ..., \alpha_{\gamma}, \beta_{\gamma}, x_1, ..., x_r: x_1^{m_1}=...=x_r^{m_r}= \prod_{j=1}^{\gamma}[\alpha_{j}, \beta_{j}]\prod_{i=1}^r\, x_i=1\rangle.
\end{equation}

Moreover, each action $\sigma$ of $G$ on $X$ is determined by an epimorphism $\theta: \Delta\to G$ from the Fuchsian group $\Delta$ such that $\text{ker}(\theta)=\Gamma$.

Considering the presentation of $\Delta$ as in \eqref{delta}, we define \textit{the signature of G on X} as the vector of numbers $(\gamma; m_1,...,m_r)$.

If we mark every $m_i$ with the conjugacy class of subgroups $C_i$ of $G_i=\langle \theta(x_i)\rangle$ of $G$, we define the \textit{geometric signature of G on X} as $(\gamma; [m_1, C_1],...,[m_r, C_r])$ \cite{yoibero}. The genus of $X$ is given by the Riemann-Hurwitz formula 

$$g(X)=|G|(\gamma-1)+1+\frac{|G|}{2}\sum_{i=1}^{r}\big(1-\frac{1}{m_i}\big).$$

A $2\gamma+r$ tuple $(a_1, \dots, a_{\gamma},b_1, \dots,
b_{\gamma},c_1, \dots,c_r)$ of elements of $G$ is called a
\textit{generating vector of type $(\gamma;m_1,...,m_r)$} if the
following conditions are satisfied: 

\begin{enumerate}
\item $G$ is generated by the elements $(a_1, \dots, a_{\gamma},b_1,
\dots, b_{\gamma},c_1, \dots,c_r)$; 
\item $\text{order}(c_i)=m_i$; and 
\item $\prod_{i=1}^{\gamma}[a_i,b_i]\prod_{j=1}^rc_j=1$, where $[a_i,b_i]$ is the commutator of $a_i, b_i \in G$. 
\end{enumerate}

The existence of a generating vector of a given type ensures the existence of a Riemann surface with an action of the corresponding group with the given signature (Riemann's existence theorem). For a good account on Fuchsian groups and group actions see \cite{bro, macbeath, macbeath-singerman, yoibero, singerman}.

We say that two actions $\sigma_1, \sigma_2$ are {\it topologically equivalent} if there is an $\omega\in \text{Aut}(G)$ and an $h\in \text{Hom}^+(X)$ such that
\begin{equation}\label{equivalentactions}
\sigma_2(g) = h \sigma_1(\omega(g)) h^{-1}
\end{equation}
for all $g\in G$.

Equivalently, two epimorphisms $\theta_1, \theta_2:\Delta\to G$ define two topologically equivalent actions $\sigma_1, \sigma_2$ of $G$ on $X$ if there exist two automorphisms, namely $\phi:\Delta\to \Delta$ and $\omega: G\to G$ such that 
\begin{equation}\label{equivalentactions-fuchsian}
\theta_2=\omega \circ \theta_1 \circ \phi^{-1}
\end{equation}

Let $\mathcal{B}$ be the subgroup of $\text{Aut}(\Delta)$ induced by orientation preserving homeomorphisms. In other words, $\sigma_1, \sigma_2$ are topologically equivalent if and only if the epimorphisms $\theta_1, \theta_2$ lie in the same $\mathcal{B}\times \text{Aut}(G)-$class. For references see \cite{bro, harvey}.

Since we are interested in actions with quotient of genus $0$, we consider only the elements of $\mathcal{B}$ corresponding to compositions of automorphims $\Phi_{i,i+1}\in \text{Aut}(\Delta)$ such that $\Phi_{i,i+1}(x_i)=x_ix_{i+1}x_i^{-1}, \Phi_{i,i+1}(x_{i+1})=x_i, \Phi_{i,i+1}(x_j)=x_j$ for all $j\neq i, i+1.$ 


Let $\mathcal{M}(g, G, \theta)$ be the stratum of surfaces of genus $g$ with automorphism group $G$ in the conjugacy class in the mapping class group of the action determined by the epimorphism $\theta$. Let $\overline{\mathcal{M}}(g, G, \theta)$ be the subset of surfaces having an automorphism group containing the automorphism group $G$ in the conjugacy class determined by $\theta$ in the mapping class group $\text{Mod}(\Gamma)$. 



It is known that if the signature of the action is $(\gamma; m_1,...,m_r)$, then the Teichmuller dimension is $3(\gamma -1) + r$. For references about Teichm\"uller and moduli spaces see \cite{bro, harvey, macbeath-singerman, nag}. 


Let $G$ be a finite group. If $\psi: G\to \text{GL}(V)$ is an irreducible representation of $G$ over $\CC$ afforded by the complex vector space $V$, we denote by $F$ its field of definition and by $K$ the field obtained by extending $\QQ$ by the values of its character $\chi_V$; then $K\subseteq F$ and $m_V=[F:K]$ is the Schur index of $V$. 

If $H$ is a subgroup of $G$, $\text{Ind}_H^G1$ denotes the representation of $G$ induced by the trivial representation of $H$. Besides $\langle U,V \rangle$ denotes the usual inner product of the characters. By Frobenius Reciprocity $\langle \text{Ind}_H^G 1,V\rangle=\text{dim}_\CC \text{Fix}_HV$, where $\text{Fix}_HV$ is the subspace of $V$ fixed under $H$. We use indistinctly $V$ or $\chi_{\psi}$ to denote the representation or the vector space, affording it, when the context is clear. 

A good account about representation theory of finite groups may be found in \cite{curtis, james-liebeck, serre1}.  

Given a compact Riemann surface $X$ with an action of a group $G$ we consider the induced homomorphism  $\rho:\QQ[G]\to \text{End}_\QQ(JX)$. 

$\QQ[G]$ decomposes into a product $Q_0\times \dots \times Q_t$ of simple $\QQ-$algebras, the simple algebras $Q_i$ are in bijective correspondence with the rational irreducible representations of $G$. That is, for any rational irreducible representation $\cW_i$ of $G$ there is a uniquely determined central idempotent $e_i$ generating $Q_i$. Moreover, the decomposition of every $Q_i=L_1\times \dots \times L_{n_i}$ into a product of minimal left ideals (all isomorphic) gives a further decomposition of the Jacobian. There are idempotents $f_{i1},\dots, f_{in_i}\in Q_i$ such that $e_i=f_{i1}+\dots +f_{in_i}$ where $n_i=\text{dim} V_i/m_{V_i}$, with $V_i$ a $\CC-$irreducible representation associated to $\cW_i$. The factor $ B_{i}^{n_i}$ is defined as the image of $m\rho(e_i)\in \text{End}(JX)$, and it is shown that it does not depend on $m$ (up to isogeny). In the same way the factor $B_i$ corresponds to the image of any of the $f_{in_i}$. See \cite{bl, l-r} for details regarding these decompositions. 

It is known that the factor associated to the trivial representation of $G$, i.e. the image of $e_0\in Q_0$, is isogenous to $J_G=J(X/G)$. The addition map is an isogeny \cite{bl, cr, l-r}. 
\begin{equation}\label{nu-tilde}
\nu:J_G\times B_{1}^{n_1}\times \dots \times B_{t}^{n_t}\to JX.
\end{equation}
This is called \textit{the group algebra decomposition of $JX$}. We denote the isogeny $\nu$ by $JX\sim J_G\times B_{1}^{n_1}\times \dots \times B_{t}^{n_t}$.

\subsection{Regarding the factors}\label{SS:factors}
Consider the decomposition of $JX$ given in \eqref{nu-tilde} and $H\leq G$, then the corresponding group algebra decomposition of the Jacobian variety $J_H=J(X/H)$ of the intermediate quotient $X/H$  is given as follows 
\begin{equation}\label{carocca-rodriguez}
J_H \sim J_G\times B_{1}^{\frac{\text{dim} \text{Fix}_HV_1}{m_1}}\times \dots \times B_{t}^{\frac{\text{dim} \text{Fix}_HV_t}{m_t}}
\end{equation}

As in \cite{cr}, for every $H\leq G$ define $p_H=\frac{1}{\vert H\vert}\sum_{h\in H} h$. It is a central idempotent in $\QQ[H]$ corresponding to the trivial representation of $H$.
Moreover, $\ima (p_H)=\pi_H^*(J_H)$, where $\pi_H^*(J_H)$ is the pullback of $J_H$ by $\pi_H:X\to X/H$. Finally define $f_H^{i}=p_H e_{i}$,  which is an idempotent in $\QQ[G]e_{i}$, if $\text{dim} \text{Fix}_HV_i\neq 0$ then
\begin{equation}\label{fh}
\ima (f_H^i)=B_{i}^{\frac{\text{dim} \text{Fix}_HV_i}{m_{i}}}.
\end{equation}

The induced action of $G$ on $JX$ provides geometrical information about the components of the group algebra decomposition of $JX$ \cite{cr}. In fact, the dimension of the subvarieties in the decomposition (\ref{nu-tilde}) are obtained using the generating vector of the action \cite[Theorem 5.2]{yoibero}.

\begin{thm}[Dimension of the $B_j$'s]\label{dimensiones}
Let $G$ be a finite group acting on a compact Riemann surface $X$ with geometric signature given by $(\gamma; [m_1,C_1], ..., [m_r, C_r])$. Then the dimension of any subvariety $B_i$ associated to a non trivial rational irreducible representation $\cW_i$ in (\ref{nu-tilde}), is given by
$$\dim B_i=k_i\Big(\dim V_i(\gamma -1)+\frac{1}{2}\sum_{k=1}^r (\dim V_i-\dim \text{Fix}_{G_k} V_i)\Big),$$
where $G_k$ is a representative of the conjugacy class $C_k$, $\dim V_i$ is the dimension of a complex irreducible representation $V_i$ associated to $\cW_i$, $K_i = K_{V_i}$ and $k_i=m_i [K_i:\QQ]$.  
\end{thm}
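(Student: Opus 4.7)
The plan is to express $\dim B_i$ as a character-theoretic multiplicity on $H^1(X,\QQ)$, and then evaluate that multiplicity using the Lefschetz fixed-point formula applied to the $G$-action on $X$.

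First, I would reduce the computation to characters. The subvariety $B_i^{n_i} = \ima\rho(e_i)$ has, as its rational Hodge structure, the $\cW_i$-isotypic component of $H^1(X,\QQ)$. From the Wedderburn description of the simple factor $Q_i \cong M_{n_i}(D_i)$, where $D_i$ is a division algebra of dimension $m_i^2$ over its centre $K_i$ and $n_i = \dim V_i / m_i$, one computes $\dim_\QQ \cW_i = k_i \dim V_i$. Combining $\dim_\CC B_i^{n_i} = \tfrac{1}{2}\dim_\QQ(\cW_i\text{-isotypic in } H^1(X,\QQ))$ with $\dim B_i = \dim B_i^{n_i}/n_i$ yields $\dim B_i = \tfrac{1}{2} k_i \cdot m_V$, where $m_V$ denotes the multiplicity of $V_i$ in the complex representation $H^1(X,\CC)$.

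Next, I would compute $m_V$ via the fixed-point data. By the Lefschetz fixed-point theorem, the character of the $G$-representation on $H^1(X,\QQ)\otimes\CC$ equals $2g(X)$ at the identity and $2 - |\text{Fix}(g)|$ for every non-identity $g \in G$. Organising the fixed points according to the branch data of $X \to X/G$ via the orbit-stabilizer theorem produces the virtual-character identity $\chi_{H^1(X)} = 2\chi_{\mathbf{1}} + 2(\gamma-1)\chi_{\QQ[G]} + \sum_{k=1}^r(\chi_{\QQ[G]} - \chi_{\text{Ind}_{G_k}^G\mathbf{1}})$. Taking the inner product with $\chi_{V_i}$, using $\langle \chi_{V_i},\chi_{\QQ[G]}\rangle = \dim V_i$, Frobenius reciprocity $\langle\chi_{V_i},\chi_{\text{Ind}_{G_k}^G\mathbf{1}}\rangle = \dim\text{Fix}_{G_k}V_i$, and $\langle\chi_{V_i},\chi_{\mathbf{1}}\rangle = 0$ (since $V_i$ is non-trivial), yields $m_V = 2\dim V_i(\gamma-1) + \sum_{k=1}^r(\dim V_i - \dim\text{Fix}_{G_k}V_i)$. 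Substituting into the first step gives the formula stated in the theorem.

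The main obstacle is the first step, where the factor $k_i = m_i[K_i:\QQ]$ must be produced by carefully tracking the interplay between the Schur index $m_i$, the Galois orbit size $[K_i:\QQ]$, and the Wedderburn multiplicity $n_i$, since it is $B_i^{n_i}$ rather than $B_i$ itself that is the direct geometric image of the central idempotent $e_i$. The character identity in the second step is in principle standard Riemann-Hurwitz/Lefschetz bookkeeping, but the enumeration of the branch cycles and their $G$-conjugates requires some care before the permutation-character rewriting becomes visible.
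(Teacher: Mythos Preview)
The paper does not supply its own proof of this theorem: it is quoted verbatim as \cite[Theorem 5.2]{yoibero} and used as a black box thereafter. So there is no in-paper argument to compare against.

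That said, your outline is correct and is essentially the Chevalley--Weil/Eichler trace argument that underlies the cited result. Your first step, identifying $2\dim B_i^{n_i}$ with the $\QQ$-dimension of the $\cW_i$-isotypic piece of $H^1(X,\QQ)$ and then unwinding $n_i=\dim V_i/m_i$ and $\dim_\QQ\cW_i=k_i\dim V_i$ to obtain $\dim B_i=\tfrac{1}{2}k_i\,m_V$, is exactly the reduction carried out (in slightly different language) in \cite{yoibero} and \cite{cr}. Your second step is the standard derivation of the multiplicity $m_V$ in $H^1(X,\CC)$ from the Lefschetz fixed-point count, repackaged as the virtual-character identity $\chi_{H^1}=2\chi_{\mathbf 1}+2(\gamma-1)\chi_{\text{reg}}+\sum_k(\chi_{\text{reg}}-\mathrm{Ind}_{G_k}^G\mathbf 1)$; this is precisely the Chevalley--Weil formula \cite{chevw}, and the inner product with $\chi_{V_i}$ gives the claimed expression after Frobenius reciprocity. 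The only delicate bookkeeping point, as you note, is the passage from the image of the central idempotent (which is $B_i^{n_i}$) to $B_i$ itself, and you have handled it correctly.
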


To identify the factors as Jacobians of intermediate coverings we use the following  result \cite[Lemma 1]{yo}.
\begin{lem}\label{metodo}
Let $X$ be a Riemann surface with an action of a finite group $G$ such that the genus of $X/G$ is equal to zero. Consider $\nu$ the group algebra decomposition of $JX$ as in \ref{nu-tilde}.  
\begin{itemize}
\item[(i)] If $H\leq G$ is such that $\dim_\CC \text{Fix}_HV_i=m_i$, where $m_i$ is the Schur index of the representation $V_i$, then we have that $\ima (f_H^i)=B_{i}.$ In addition, 
\item[(ii)] if $\dim_\CC \text{Fix}_HV_l = 0$ for all $l$, $l\neq i$, 
such that $\dim_\CC B_l\neq 0$ in the isotypical decomposition of $JX$ in Equation (\ref{nu-tilde}), then $$J_H\sim \ima (p_H)=B_{i}.$$
\end{itemize}
\end{lem}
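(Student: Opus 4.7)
The plan is to apply the two formulas recalled in Section \ref{SS:factors}: equation \eqref{fh}, which identifies $\ima(f_H^i)$ as a power of $B_i$, and equation \eqref{carocca-rodriguez}, which decomposes the intermediate Jacobian $J_H$ into the $B_l$'s. Both parts of the lemma then follow by a short bookkeeping argument once the genus-zero hypothesis $g(X/G)=0$ is used to kill the $J_G$ factor.

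For part (i) the argument is immediate: substituting the hypothesis $\dim_\CC \text{Fix}_H V_i = m_i$ into \eqref{fh} makes the exponent $\dim_\CC \text{Fix}_H V_i / m_i$ equal to $1$, so $\ima(f_H^i) = B_i^{1} = B_i$.

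For part (ii) I would proceed in two steps. First, since $g(X/G) = 0$ the factor $J_G$ in \eqref{carocca-rodriguez} is trivial, and the vanishing hypothesis $\dim_\CC \text{Fix}_H V_l = 0$ for $l\neq i$ (among the $l$ with $\dim_\CC B_l\neq 0$) eliminates all the other factors, leaving $J_H \sim B_i^{\dim_\CC \text{Fix}_H V_i / m_i}$; combined with the hypothesis of (i), read as carried over from ``In addition'', the exponent collapses to $1$ and one gets $J_H \sim B_i$. Second, to identify $\ima(p_H)$ with $B_i$ on the nose, I would write $p_H = p_H \sum_l e_l = \sum_l p_H e_l = \sum_l f_H^l$ as a sum of pairwise orthogonal idempotents in $\QQ[G]$, so that $\ima(p_H) = \bigoplus_l \ima(f_H^l)$; the vanishing hypothesis together with \eqref{fh} then kills every summand except $\ima(f_H^i)$, which equals $B_i$ by part (i).

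I do not foresee a real obstacle here, as the statement is essentially a direct bookkeeping consequence of the machinery already in place. The one subtlety to be careful about is the distinction between the two senses of equality present: the relation $J_H \sim \ima(p_H)$ is an isogeny arising from $\ima(p_H) = \pi_H^*(J_H)$ (which is isogenous to $J_H$), whereas $\ima(p_H) = B_i$ is an honest equality of abelian subvarieties of $JX$ obtained from the orthogonal-idempotent decomposition of $p_H$.
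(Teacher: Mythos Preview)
Your argument is correct and is exactly the natural derivation from equations \eqref{fh} and \eqref{carocca-rodriguez}: substitute the hypothesis $\dim_\CC \text{Fix}_H V_i = m_i$ into \eqref{fh} for part (i), and for part (ii) use \eqref{carocca-rodriguez} together with $g(X/G)=0$ and the vanishing hypothesis, then decompose $p_H = \sum_l f_H^l$ to identify $\ima(p_H)$.

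There is, however, nothing in the paper to compare your proof against: Lemma \ref{metodo} is not proved here but is quoted verbatim from \cite[Lemma 1]{yo} (as the sentence introducing it indicates). So your write-up stands on its own; it simply reproduces the short proof that the cited reference presumably contains, and it is consistent with the machinery set up in Section \ref{SS:factors}. The one caveat you flagged yourself --- that $J_H \sim \ima(p_H)$ is an isogeny via $\pi_H^*$ while $\ima(p_H) = B_i$ is an equality of subvarieties --- is the only point worth making explicit, and you have done so.
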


\subsection{Rational irreducible representations of the dihedral group}\label{rationalrep}

Let $D_{2n}=\langle a,s: a^{2n}=s^2=(as)^2=1\rangle$ be the dihedral group of order $4n$. It is known (see for instance \cite{crro, james-liebeck}) that $D_{2n}$ has four complex irreducible representations of degree one denoted by $\chi_0, \chi_1, \chi_2, \chi_3$, and $n-1$ of degree two $\psi_j$ for $1\leq j\leq n-1$ given by 

$$\psi_j (a)= \left(\begin{array}{cc} \omega^j & 0 \\
0 & \omega^{-j}\end{array}\right)\text{ and }\psi_j(s)= \left(\begin{array}{cc} 0 & 1 \\
1 & 0\end{array}\right),$$
where $\omega=\omega_{2n}=\text{exp}\frac{\pi i}{n}$. The character table is in Table 1

\vskip12pt

\begin{center}\label{TbD}
\begin{tabular}{|l|c|c|c|c|c|c|r|}\hline
Conjugate classes in $D_{2n}$: &  1 & $a^n $ & $a^r (1\leq r\leq n-1)$ & $s$ & $as$ \\ \hline
$\chi_0$   &  1 & 1 & 1 & 1 & 1 \\
$\chi_1$   &  1 & 1 & 1 & -1 & -1 \\
$\chi_2$   &  1 & $(-1)^n$ & $(-1)^r$ & 1 &  -1 \\
$\chi_3$   &  1 & $(-1)^n$ & $(-1)^r$ & -1 & 1 \\
$\chi_{\psi_j}$   &  2 & $2(-1)^j$ & $\omega^{jr} + \omega^{-jr}$ & 0 & 0 \\ \hline
\end{tabular}
\vspace{0.3cm}
\\
\textsc{Table 1.} Character Table of $D_{2n}$.
\end{center}
\vspace{0.3cm}

We are interested in the rational irreducible representations of $D_{2n}$. Let 
$$\Omega(2n)=\{d:d\text{ is a divisor of }2n\text{ and } d<n\}.$$


For each $d$ in $\Omega(2n)$, consider $\psi_d$ as above. Its field of definition coincides with the field generated by its characters. It means that the Schur index $m_d=1$ and  
$$K_{\psi_d}=\QQ(\omega_{2n}^d + \omega_{2n}^{-d})=\QQ(\omega_{\frac{2n}{d}}+ \omega^{-1}_{\frac{2n}{d}}).$$
Furthermore $[K_{\psi_d}:\QQ]=\frac{\varphi(\frac{2n}{d})}{2},$ where $\varphi$ denotes the Euler function. Hence, the rational irreducible representation $W_{\psi_d}$ of $D_{2n}$ satisfies 
$$K_{\psi_d}\otimes_{\QQ}W_{\psi_d}= \oplus_{\sigma\in \text{Gal}(K_{\psi_d}/\QQ)} \psi_d^{\sigma}$$
and it is of  degree $\text{deg}(W_{\psi_d})=\text{dim}\psi_d [K_{\psi_d}:\QQ]=2\frac{\varphi(\frac{2n}{d})}{2}=\varphi\displaystyle{(\frac{2n}{d})}$.

\begin{rem}\label{R:outer}

Observing Table 1 of the complex characters of $D_{2n}$, we notice that the characters $\chi_2$ and $\chi_3$ have the same values for each conjugation class except for the classes $\mathcal{C}_1=\{\langle a^{2l+1}s\rangle:0\leq l \leq n-1\}$ and $\mathcal{C}_2=\{\langle a^{2l}s\rangle:0\leq l \leq n-1\}$ represented by the elements $as$ and $s$ respectively.
Hence, in particular we have $\chi_2(s)=\chi_3(as)$. In fact, we have the following results

\begin{itemize}
\item {\bf Result 1.}
$\chi_2=\chi_3\circ \omega$ where $\omega\in \text{Out}(D_{2n})\subset \text{Aut}(D_{2n})$ is the outer automorphism given by $\omega(a)=a, \omega(s)=as$. 

\item {\bf Result 2.} Let $\mathcal{C}_1, \mathcal{C}_2$ be the conjugation classes defined above, and $\omega$ the automorphism in Result 1, then $\text{Fix}_{H}\chi_2=\text{Fix}_{\omega(H)}\chi_3$ for all $H\in \mathcal{C}_2$, where $\text{Fix}_{H}\chi$ is the fixed space defined in Section \ref{prelimi}. In particular, $\text{Fix}_{\langle s\rangle}\chi_2=\text{Fix}_{\langle\omega (s)\rangle}\chi_3$.
\end{itemize}
\end{rem}


\section{On actions of $D_{2n}$ with signature $(0;2,2,2,2,n)$.}\label{S:strata}

We recall for this section that two actions $\sigma_1, \sigma_2$ of $G$ on $X$ are topologically equivalent if they satisfy equation (\ref{equivalentactions}) i.e. if the epimorphisms $\theta_1, \theta_2:\Delta\to G$, lie in the same $\mathcal{B}\times \text{Aut}(G)-$class as explained in the Section \ref{prelimi}. 

In this section, we classify the different generating vectors of type $(0;2,2,2,2,n)$, i.e. we classify all the $5-$tuples $(c_1, \dots,c_5)$ of elements of $D_{2n}$ such that $D_{2n}$ is generated by $c_1, \dots,c_5$ with $\text{order}(c_i)=2, 1\leq i\leq 4$, $\text{order}(c_5)=n$, and $\prod_{i=1}^5c_i=1$. The results depend on the parity of $n$, thus we treat both cases separately. 

\subsection{$n$ an odd number.}

\begin{prop}\label{vectoresgeneradores}
Let $n\geq 3$ be an odd number. Then, every generating vector of $D_{2n}$ of type $(0;2,2,2,2,n)$ belongs to one of the following classes with representatives 
\begin{itemize}
\item[I.] $(a^n, a^n, s, a^2 s, a^2)$,
\item[II.] $(s, s, as, a^3 s, a^2)$.
\end{itemize}
\end{prop}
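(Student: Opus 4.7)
The plan is to enumerate the generating vectors $(c_1,c_2,c_3,c_4,c_5)$ with $\text{ord}(c_i)=2$ for $i\le 4$, $\text{ord}(c_5)=n$, and $c_1c_2c_3c_4c_5=1$, and then quotient by $\mathcal{B}\times\text{Aut}(D_{2n})$. Since $n$ is odd, the involutions of $D_{2n}$ are the central element $a^n$ and the $2n$ reflections $a^i s$, while any element of order $n$ lies in $\langle a^2\rangle$ and can be carried to $a^2$ by an automorphism $a\mapsto a^l$ (choose $l$ odd with $l\equiv k^{-1}\!\pmod n$). So we first normalize $c_5=a^2$; the remaining data is $(c_1,\dots,c_4)$ with $c_1c_2c_3c_4=a^{-2}$, taken modulo the Hurwitz moves $\Phi_{i,i+1}$ for $i\in\{1,2,3\}$ and the stabilizer $\{s\mapsto a^t s,\,a\mapsto a\}_{t\in\ZZ/2n}$ of $a^2$ inside $\text{Aut}(D_{2n})$.

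Let $j$ be the number of reflections among $c_1,\dots,c_4$. Reducing modulo the rotation subgroup forces $j$ to be even, and $j=0$ gives $(a^n)^4=1\neq a^{-2}$, hence $j\in\{2,4\}$. In the case $j=2$, centrality of $a^n$ turns any Hurwitz move swapping $a^n$ with a neighbour into a plain transposition, so we may assume $c_1=c_2=a^n$; the relation $c_3c_4=a^{-2}$ yields $c_3=a^{i_1}s$ and $c_4=a^{i_1+2}s$, and the automorphism $s\mapsto a^{-i_1}s$ normalizes the tuple to class I. This generates $D_{2n}$ because $\langle a^n,a^2\rangle=\langle a\rangle$ when $n$ is odd. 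In the case $j=4$, write $c_k=a^{p_k}s$; the product relation becomes $p_1-p_2+p_3-p_4\equiv -2\pmod{2n}$, and generation is equivalent to exactly two of the $p_k$ being even and two odd (otherwise the tuple sits in the index-two subgroup $\langle a^2,s\rangle$ or $\langle a^2,as\rangle$). After normalizing $p_1=0$ via the shift $s\mapsto a^{-p_1}s$, a sequence of Hurwitz moves on reflections, governed by
\[
\Phi_{i,i+1}:(a^p s,a^q s)\mapsto(a^{2p-q}s,a^p s),
\]
combined with further shift automorphisms, reduces the tuple to $(s,s,as,a^3 s)$, which is class II and generates $D_{2n}$ since $s\cdot(as)=a^{-1}$.

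To see that classes I and II are inequivalent, observe that the number of central involutions $a^n$ among the first four entries is preserved by both the Hurwitz moves (which only conjugate entries and so respect conjugacy classes) and $\text{Aut}(D_{2n})$ (which fixes the center setwise); this count is $2$ for class I and $0$ for class II. The main obstacle is the $j=4$ reduction: the Hurwitz orbit of reflection quadruples is intricate, and one must navigate between the three possible parity patterns $(E,E,O,O),\,(E,O,E,O),\,(E,O,O,E)$ that arise before reaching $(p_1,p_2,p_3,p_4)=(0,0,1,3)$. The key observations are that $\Phi_{2,3}$ and $\Phi_{3,4}$ cycle between these patterns while preserving the constraint, and that $\Phi_{1,2}$ composed with a shift keeps the pattern fixed but translates $p_3,p_4$ by $p_2$, providing enough flexibility to complete the normalization.
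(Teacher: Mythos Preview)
Your approach is essentially the same as the paper's: normalize $c_5=a^2$ via an automorphism of $D_{2n}$, split into the cases of two versus four reflections among $c_1,\dots,c_4$, reduce each case with Hurwitz moves together with the shift automorphisms $s\mapsto a^t s$, and separate the two classes by counting occurrences of the central involution $a^n$ (the paper phrases this as $\langle a^n\rangle$ being characteristic). In the four-reflection case both arguments remain at the level of a sketch---the paper exhibits a handful of explicit braid words while you describe how the move $(a^ps,a^qs)\mapsto(a^{2p-q}s,a^ps)$ acts on the exponent tuple and its parity pattern---so your write-up matches the paper's level of detail.
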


\begin{proof}

Consider an action $\sigma$ of $D_{2n} = \langle a, s | a^{2n}=s^2 = (as)^2 = 1 \rangle $ with monodromy $\theta: \Delta \to D_{2n}, s(\Delta)= (0; 2,2,2,2,n)$, with generating vector $(\theta(x_1), \theta(x_2), \theta(x_3), \theta(x_4), \theta(x_5))$. First of all we can assume, up to an automorphism of $D_{2n}$, that $\theta(x_5) = a^2$.
Now as $\theta$ is an epimorphism the condition $\theta(x_1x_2x_3x_4x_5) = 1$ obliges to one of the following cases:

\begin{itemize}
\item[I.] Two of $x_1, x_2, x_3, x_4$ are mapped to $a^n$ and the other two to $a^{t_1}s$ and $a^{t_2}s$ such that $t_1-t_2 \equiv 2 \mod(2n)$, or

\item[II.] All $x_1, x_2, x_3, x_4$ are mapped to $a^{t_{i}}s$, $1\le i \le 4$, such that $t_1-t_2+t_3-t_4 \equiv 2 \mod(2n)$.
\end{itemize}

 Observe that in case I the parity of $t_1$ and $t_2$ coincides. Applying a suitable composition of automorphisms of $\Delta$ we obtain that a generating vector of an action $\sigma$ in this case is in principle of the following types

 i) $(a^n, a^n, a^{t_1}s, a^{t_2}s, a^2)$, with $t_1$ and $t_2$ even,

 ii) $(a^n, a^n, a^{t_1}s, a^{t_2}s, a^2)$, with $t_1$ and $t_2$ odd,

For instance $\Phi_{2,3}\in \mathcal{B}$ conjugates the generating vector $(a^n, s, a^n, a^2s, a^2)$ with $(a^n, a^n, s, a^2s, a^2)$, since $ \Phi_{2,3}(x_2) =x_2x_3x_2^{-1},   \Phi_{2,3}(x_3) =x_2$ and $\Phi_{2,3}(x_i) =x_i $ for $i\neq 2,3$.


Finally applying an inner automorphism of $D_{2n}$ one gets that any generating vector of type i) is equivalent to $(a^n, a^n, s, a^2s, a^2)$ and any generating vector of type ii) is equivalent to $(a^n, a^n, as, a^3s, a^2)$. Using an outer automorphism of $D_{2n}$, such as $s\mapsto as$, one gets that any generating vector in this case is equivalent to $(a^n, a^n, s, a^2s, a^2)$. 

In case II at least one of $t_{i}$ must be odd and one even, and the parity of $t_1+t_3$ equals the one of $t_2+t_4$, these facts force the exponents $t_i$ to be two odd and two even. This is, 

 $(a^{t_1}s, a^{t_2}s, a^{t_3}s, a^{t_4}s, a^2)$, with two t$_{i}$'s  even numbers, and two $t_i$'s odd. 

We proceed in an analogous way as in case I. For instance to prove that  $(s,s,as,a^3s,a^2)$ is equivalent to $(as,as,s,a^2s,a^2)$, hence they correspond to the same action (up to topological equivalence), we use the element $\Phi_{2,3}^{-1}\Phi_{1,2}^{-1}\Phi_{3,4}\Phi_{2,3}^{-1}\in \mathcal{B}$. The element $\Phi_{2,3}^{-1}\Phi_{1,2}^{-1}\Phi_{3,4}\Phi_{1,2}\Phi_{2,3}^{-1}\in \mathcal{B}$ conjugates $(as,as,a^2s,a^4s,a^2)$ to $(as,as,s,a^2s,a^2)$, and the element $\Phi_{2,3}\Phi_{1,2}^{-1}\Phi_{3,4}^{-1}\Phi_{2,3}^{-1}\in \mathcal{B}$  $(s,s,as,a^3s,a^2)$ to $(as,as,a^2s,a^4s,a^2)$. 

Finally, we obtain that any generating vector in this case is equivalent to $(s, s, as, a^3s, a^2)$. 

To prove that $\sigma_1$ is not equivalent to $ \sigma_2$, it is enough to note that $\langle a^n \rangle$ is a characteristic subgroup of $D_{2n}$.

\end{proof}

\begin{defn}\label{D:sigmas}
Let $n$ be an odd number. Let $\sigma$ be an action of $D_{2n}$ on a curve with signature $(0;2,2,2,2,n)$. We say that $\sigma$ is an action of
\begin{itemize}
\item[(i)] Type 1, we write $\sigma_1$, if its generating vector is equivalent to $(a^n, a^n, s, a^2 s, a^2)$. 
\item[(ii)] Type 2, we write $\sigma_2$, if its generating vector is equivalent to $(s, s, as, a^3 s, a^2)$.
\end{itemize}
\end{defn}

A direct consequence from Proposition \ref{vectoresgeneradores} is that there are two non equivalent topological actions in correspondence with the two classes of generating vectors. Thus, there are two conjugacy classes of $D_{2n}$ in the mapping class group $\text{Mod}(\Gamma_{2n-1})$ for $n$ odd.








We summarize all the results in this subsection in Theorem \ref{T:strata}. 

\begin{thm}\label{T:strata}
The family of compact Riemann surfaces with action of $D_{2n}$ of type $(0;2,2,2,2,n)$ has two equisymmetric strata each one of dimension $2$: $\mathcal{M}(2n-1, D_{2n}, \theta_1)$ and $\mathcal{M}(2n-1, D_{2n}, \theta_2)$, where $\theta_1, \theta_2$ are the epimorphisms corresponding to the actions $\sigma_1, \sigma_2$ in Definition \ref{D:sigmas}. 
\end{thm}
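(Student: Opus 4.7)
The plan is to assemble three ingredients, two of which are already established in the excerpt.

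First, the number of strata follows directly from Proposition \ref{vectoresgeneradores}. By the discussion surrounding equation (\ref{equivalentactions-fuchsian}), topological equivalence classes of actions of $D_{2n}$ on a surface of genus $2n-1$ with signature $(0;2,2,2,2,n)$ are in bijection with $\mathcal{B}\times \Aut(D_{2n})$-classes of generating vectors of that type. Proposition \ref{vectoresgeneradores} exhibits exactly two such classes, with representatives
\[
(a^n, a^n, s, a^2s, a^2) \quad \text{and} \quad (s, s, as, a^3s, a^2),
\]
so there are exactly two strata, which we denote $\mathcal{M}(2n-1, D_{2n}, \theta_1)$ and $\mathcal{M}(2n-1, D_{2n}, \theta_2)$ after the corresponding epimorphisms $\theta_1, \theta_2 : \Delta \to D_{2n}$. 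I would briefly remind the reader why the two classes are genuinely distinct, invoking the observation at the end of the proof of Proposition \ref{vectoresgeneradores}: the subgroup $\langle a^n \rangle$ is characteristic in $D_{2n}$, so any $\omega\in \Aut(D_{2n})$ fixes it setwise. The multiset of images $\{\theta(x_i)\}$ intersected with $\langle a^n\rangle\setminus\{1\}$ therefore is an invariant of the topological class; this invariant is nonempty for $\sigma_1$ and empty for $\sigma_2$.

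Second, the genus is $2n-1$ by the Riemann--Hurwitz formula recalled in Section \ref{prelimi}: with $|G|=4n$, $\gamma=0$, and branching data $(2,2,2,2,n)$,
\[
g(X) = 4n(0-1) + 1 + \frac{4n}{2}\left(4\cdot \tfrac{1}{2} + 1 - \tfrac{1}{n}\right) = -4n + 1 + 6n - 2 = 2n-1.
\]

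Third, each stratum has complex dimension $2$. This is the Teichm\"uller dimension of a Fuchsian group of signature $(\gamma; m_1,\dots,m_r)$, namely $3(\gamma-1)+r$, which for $\gamma=0$ and $r=5$ gives $-3+5=2$.

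There is no real obstacle: the substantive combinatorial work was already carried out in the proof of Proposition \ref{vectoresgeneradores} (where the $\Phi_{i,i+1}\in \mathcal{B}$ operations were used to reduce every generating vector to one of the two normal forms, and the outer automorphism $s\mapsto as$ collapsed the remaining subcases in Type~I). The theorem statement is a summary, so the proof is a short assembly of the classification of generating vectors with two routine numerical facts (Riemann--Hurwitz and the Teichm\"uller dimension formula).
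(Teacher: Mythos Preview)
Your proposal is correct and matches the paper's approach: the paper states Theorem \ref{T:strata} explicitly as a summary (``We summarize all the results in this subsection in Theorem \ref{T:strata}'') without a separate proof block, relying on Proposition \ref{vectoresgeneradores} for the two classes and the Teichm\"uller dimension formula $3(\gamma-1)+r$ from Section \ref{prelimi} for the dimension $2$. Your write-up simply makes these ingredients explicit.
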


\subsection{$n$ an even number.}

\begin{prop}\label{vectoresgeneradores_2}

Let $n\geq 2$ be an even number. Then, every generating vector of $D_{2n}$ of type $(0;2,2,2,2,n)$ belongs to the class represented by $(s, s, as, a^3 s, a^2)$.  Hence there is just one action $\sigma$ up to topological equivalence.
\end{prop}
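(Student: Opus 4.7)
The plan is to follow the structure of the proof of Proposition \ref{vectoresgeneradores}, isolating the single step where the parity of $n$ becomes decisive. Starting from a generating vector $(c_1,c_2,c_3,c_4,c_5)$ of type $(0;2,2,2,2,n)$, I would first normalize $c_5=a^2$ via an automorphism of $D_{2n}$ of the form $a\mapsto a^l$, $s\mapsto s$. Every element of order $n$ in $D_{2n}$ lies in $\langle a\rangle$ and equals $a^{2j}$ for some $j$ coprime to $n$; since $n$ is even, such $j$ is necessarily odd, and the multiplicative inverse $l$ of $j$ modulo $n$ can be chosen odd as well, hence coprime to $2n$, so the required automorphism exists. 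The same dichotomy as in Proposition \ref{vectoresgeneradores} then applies: either Case I (two of the $c_i$'s equal $a^n$ and the other two are reflections $a^{t_1}s,a^{t_2}s$ with $t_1-t_2\equiv 2\pmod{2n}$) or Case II (all four $c_i$'s are reflections with $t_1-t_2+t_3-t_4\equiv 2\pmod{2n}$).

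The crux of the proposition is that Case I is now impossible. For $n$ even one has $a^n=(a^2)^{n/2}\in\langle a^2\rangle$, and moreover $(a^{t_1}s)(a^{t_2}s)=a^{t_1-t_2}=a^{\pm 2}\in\langle a^2\rangle$. Hence the subgroup generated by a Case I tuple is contained in $\langle a^2,a^{t_1}s\rangle$, which has order $2n$ and therefore cannot equal $D_{2n}$; Case I is ruled out entirely.

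In Case II the relation $c_1c_2c_3c_4c_5=1$ forces $t_1+t_2+t_3+t_4$ to be even, and if all four $t_i$ had the same parity the image would lie in an index-two subgroup of $D_{2n}$ (consisting of elements whose reflection parts come from a single coset of $\langle a^2\rangle$); so exactly two of the $t_i$ are even and two are odd. The reduction of Case II in Proposition \ref{vectoresgeneradores}, which uses compositions of braid operators $\Phi_{i,i+1}\in\mathcal{B}$, inner automorphisms of $D_{2n}$, and the outer automorphism $s\mapsto as$, then applies verbatim to bring any such generating vector to $(s,s,as,a^3s,a^2)$. This yields a single $\mathcal{B}\times\Aut(D_{2n})$-orbit of generating vectors, and hence a unique topological action, proving the proposition.

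The main obstacle is the crisp verification that Case I cannot surject onto $D_{2n}$ when $n$ is even; once this is settled, the Case II reduction is a direct transcription of the odd-$n$ argument. One should additionally check that no new orbit appears from the extra automorphism $a\mapsto a^{n+1}$ available only when $n$ is even, which follows because that map simply shifts the odd exponents of the tuple by $n$ modulo $2n$, a modification already absorbed by the braid and conjugation moves.
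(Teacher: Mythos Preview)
Your proof is correct and follows essentially the same route as the paper: reduce to the two cases of Proposition~\ref{vectoresgeneradores} and show that Case~I is vacuous when $n$ is even, leaving only the Case~II orbit represented by $(s,s,as,a^3s,a^2)$. The one substantive difference is the justification for ruling out Case~I: the paper argues that if such a tuple generated $D_{2n}$ one would have $D_{2n}\cong D_n\times\ZZ_2$, contradicting the fact that the center of $D_{2n}$ is cyclic while that of $D_n\times\ZZ_2$ is a Klein four-group; you instead observe directly that for even $n$ one has $a^n\in\langle a^2\rangle$, so a Case~I tuple lands inside the order-$2n$ subgroup $\langle a^2,a^{t_1}s\rangle$. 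Your argument is more elementary and avoids the structural detour, while the paper's version makes explicit why the odd/even dichotomy is really about the splitting $D_{2n}\cong D_n\times\langle a^n\rangle$. One small point you might tighten: for $n=2$ the last generator $c_5$ has order $2$ and could a priori be a reflection, so the normalisation $c_5=a^2$ requires first a braid move (an odd number of reflections cannot multiply to $1$, so some $c_i$ must equal $a^2$) rather than just an automorphism; the paper glosses over this as well.
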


\begin{proof}
We proceed as in Proposition \ref{vectoresgeneradores}, except that here we obtain just one equivalence class of generating vectors; the vector $ (a^n, a^n, as, a^3s, a^2)$ does not generate $D_{2n}$ for even $n$ since the center of $D_{2n}$ is cyclic of order $2$ and the center of $D_n\times \mathbb{Z}_2$ is isomorphic to the Klein group. The last statement follows from the fact that topologically non equivalent actions are in correspondence with orbits of generating vectors.
\end{proof}





We summarize the results in this subsection in the following theorem.

\begin{thm}
Let $n$ be an even number. Then the family of compact Riemann surfaces with action of $D_{2n}$ of type $(0;2,2,2,2,n)$ has one equisymmetric stratum of dimension $2$: $\mathcal{M}(2n-1, D_{2n}, \theta)$ where $\theta$ is the epimorphism corresponding to the action $\sigma$ in Proposition \ref{vectoresgeneradores_2}. 
\end{thm}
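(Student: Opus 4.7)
The plan is to assemble the theorem from three ingredients already prepared in the excerpt: the uniqueness of the topological equivalence class (Proposition \ref{vectoresgeneradores_2}), the Riemann–Hurwitz computation of the genus, and the Teichmüller dimension formula for the stratum.

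First I would invoke Proposition \ref{vectoresgeneradores_2} directly: every generating vector of $D_{2n}$ of type $(0;2,2,2,2,n)$ is $\mathcal{B}\times\Aut(D_{2n})$–equivalent to the single representative $(s,s,as,a^3s,a^2)$. By the characterization of topologically equivalent actions via epimorphisms $\theta:\Delta\to G$ in the same $\mathcal{B}\times\Aut(G)$–class recalled in Section \ref{prelimi}, this means there is exactly one topological conjugacy class of $D_{2n}$–actions with this signature in the mapping class group $\mathrm{Mod}(\Gamma_{2n-1})$. Hence there is a single equisymmetric stratum $\mathcal{M}(2n-1,D_{2n},\theta)$, where $\theta$ realizes the representative generating vector above.

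Next I would check that the surfaces in this stratum indeed have genus $2n-1$ by applying the Riemann–Hurwitz formula stated in Section \ref{prelimi} with $|G|=4n$, $\gamma=0$, and ramification data $(2,2,2,2,n)$:
\begin{equation*}
g(X)=4n(0-1)+1+\frac{4n}{2}\Bigl(4\cdot\tfrac{1}{2}+\tfrac{n-1}{n}\Bigr)=-4n+1+4n+2(n-1)=2n-1,
\end{equation*}
which justifies the subscript in $\mathcal{M}(2n-1,D_{2n},\theta)$.

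Finally, to compute the dimension of the stratum I would recall from Section \ref{prelimi} that for signature $(\gamma;m_1,\ldots,m_r)$ the Teichmüller dimension is $3(\gamma-1)+r$. With $\gamma=0$ and $r=5$ this gives $3(-1)+5=2$. Combining these three steps yields the theorem. There is no real obstacle here: all the content is in Proposition \ref{vectoresgeneradores_2}, which was the delicate case-analysis step (using the fact that for even $n$ the vector $(a^n,a^n,as,a^3s,a^2)$ fails to generate because $\langle a^n,s,a^2\rangle\cong D_n\times\ZZ_2$ has Klein–four center); the present theorem merely packages that proposition together with the two standard numerical formulas.
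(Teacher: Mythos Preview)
Your proposal is correct and matches the paper's approach: the paper presents this theorem as a summary of the subsection, relying on Proposition~\ref{vectoresgeneradores_2} for the uniqueness of the equivalence class together with the standard Riemann--Hurwitz and Teichm\"uller--dimension formulas from Section~\ref{prelimi}, and you have simply made these three ingredients explicit with the computations spelled out.
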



\section{Equation for the curves.}\label{S:equations}

We want to find equations for the curves $X$ with the action(s) of $D_{2n}$ with signature $(0;2,2,2,2,n)$. From the theory developed before (Section \ref{S:strata}), we know that the actions correspond to two strata in the case $n$ odd with generating vectors $(a^n, a^n, s,a^2s,a^2)$ and $(s,s, as,a^3s,a^2)$, for the actions labeled $\sigma_1$ and $\sigma_2$ respectively (see Definition \ref{D:sigmas}), and one stratum for $n$ even with generating vector $(s,s, as,a^3s,a^2)$ (see Proposition \ref{vectoresgeneradores_2}). We recall that each action determines a $2-$dimensional equisymmetric stratum, hence our planar affine models will depend on two complex parameters.

\begin{thm}
Let $G$ be the dihedral group $D_{2n}=\langle a^{2n}=s^2=(as)^2=1\rangle$ acting on genus $g=2n-1$ with signature $(0;2,2,2,2,n)$. If $n$ is odd, denote by $\mathcal{M}(g,G,\theta_1)$ and $\mathcal{M}(g,G,\theta_2)$ the two $2-$dimensional equisymmetric strata corresponding to the actions $\sigma_1$ and $\sigma_2$. If $n$ is even, denote by $\mathcal{M}(g,G,\theta_2)$ the $2-$dimensional equisymmetric stratum corresponding to the (unique) action $\sigma_2$. Then

\begin{enumerate}
\item The Riemann surfaces in $\mathcal{M}(g,G,\theta_1)$ are hyperelliptic curves, and an affine plane model representing them is given by

$$\mathcal{H}_{\lambda,\mu}: y^2=(x^n-\lambda^n)(x^n-\frac{1}{\lambda^n})(x^n-\mu^n)(x^n-\frac{1}{\mu^n}),$$
with $\lambda,\mu \in \mathbb{C}\setminus\{0\}$ and $\lambda^{2n}, \mu^{2n}\neq 1$.

\item The Riemann surfaces in $\mathcal{M}(g,G,\theta_2)$ are elliptic $n-$gonal curves. An affine plane model representing them is given by

$$\mathcal{E}_{a,b}: x^{2n}+y^{2n}+ax^ny^n+bx^n+by^n+1=0,$$
with $a,b\in \mathbb{C}\setminus \{0\}$.
\end{enumerate}
\end{thm}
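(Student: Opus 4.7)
The plan is to construct each family as an explicit carrier of the prescribed $D_{2n}$-action and then match it to the stratum via a dimension-plus-irreducibility argument, using Theorem \ref{T:strata}.

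For part $(1)$: The generating vector $(a^n,a^n,s,a^2s,a^2)$ places the central involution $a^n$ in the stabiliser at two of the five branch points, so a direct count using the orbit sizes $|G|/|\langle a^n\rangle|=2n$ yields $4n=2g+2$ fixed points of $a^n$ on $X$. Riemann--Hurwitz applied to $X\to X/\langle a^n\rangle$ then gives genus $0$, so $X$ is hyperelliptic with hyperelliptic involution $a^n$. The residual group $D_n = D_{2n}/\langle a^n\rangle$ acts faithfully on $X/\langle a^n\rangle\cong\PP^1$; after a choice of affine coordinate $x$ we may assume $\bar a\colon x\mapsto\omega x$ (with $\omega=e^{2\pi i/n}$) and $\bar s\colon x\mapsto x^{-1}$, the standard dihedral action. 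The $4n$ Weierstrass values are $D_n$-invariant and, on the generic locus of the stratum, form exactly two $D_n$-orbits of maximal length $2n$ with representatives $\lambda,\mu\in\CC^*$. The conditions $\lambda^{2n}\neq 1$, $\mu^{2n}\neq 1$ are precisely what excludes coincidences $\omega^k\lambda=\omega^j\lambda^{-1}$ and guarantees maximal orbit length. Each orbit contributes a factor
\[
\prod_{k=0}^{n-1}(x-\omega^k\lambda)(x-\omega^k\lambda^{-1}) = (x^n-\lambda^n)(x^n-\lambda^{-n})
\]
to the Weierstrass polynomial, whose product over the two orbits is $\mathcal{H}_{\lambda,\mu}$. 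Since the family has two effective parameters (matching the Teichm\"uller dimension $3(\gamma-1)+r=2$ of the stratum), this parametrises $\mathcal{M}(2n-1,D_{2n},\theta_1)$.

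For part $(2)$: From the vector $(s,s,as,a^3s,a^2)$, a fixed-point count shows that $\langle a^2\rangle$ of order $n$ has exactly $4$ fixed points on $X$, and Riemann--Hurwitz gives $g(X/\langle a^2\rangle)=1$. Hence $X$ is an $n$-fold cyclic cover of the elliptic curve $E:=X/\langle a^2\rangle$ branched at $4$ points with full ramification, which justifies the phrase ``elliptic $n$-gonal''. To extract a planar model I would examine the two index-$2$ dihedral subgroups $H_1=\langle s,a^2\rangle$ and $H_2=\langle as,a^2\rangle$. Riemann--Hurwitz gives $X/H_i\cong \PP^1$ for $i=1,2$, and $H_1\cap H_2=\langle a^2\rangle$ identifies the compositum $\CC(X)^{H_1}\cdot\CC(X)^{H_2} = \CC(E)$. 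Choose coordinates $x$ on $X/H_1$ and $y$ on $X/H_2$ normalised so that they are interchanged by the outer automorphism exchanging the two conjugacy classes of reflections. The $G$-invariants $u=x^n$ and $v=y^n$ land in $\CC(X/G)$ and satisfy, after a suitable affine normalisation of the four ramification values on $E$, the conic relation
\[
u^2+v^2+auv+bu+bv+1 = 0,
\]
so that the preimage curve in the $(x,y)$-plane is $\mathcal{E}_{a,b}$. The two parameters $(a,b)$ encode the cross-ratio data of the four branch points of $X\to E$, producing a $2$-dimensional family.

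The principal technical step will be verifying that the $D_{2n}$-action on $\mathcal{E}_{a,b}$ so obtained has signature $(0;2,2,2,2,n)$ of topological type $\theta_2$ rather than $\theta_1$. The signature follows from a local monodromy computation at the five branch points of $\mathcal{E}_{a,b}\to\mathcal{E}_{a,b}/D_{2n}\cong\PP^1$, identifying the stabilisers with the prescribed cyclic subgroups of orders $2,2,2,2,n$. For the odd-$n$ case, where the class $\theta_1$ also exists, the distinction follows because on $\mathcal{E}_{a,b}$ the central involution $a^n$ acts \emph{freely} (no fixed points), in sharp contrast with its $2g+2$ fixed points in the hyperelliptic model $\mathcal{H}_{\lambda,\mu}$. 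Combined with irreducibility of the strata from Theorem \ref{T:strata}, the 2-parameter family $\mathcal{E}_{a,b}$ exhausts $\mathcal{M}(2n-1,D_{2n},\theta_2)$.
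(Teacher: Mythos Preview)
For part~(1) your argument coincides with the paper's: both show that $a^n$ is the hyperelliptic involution via a fixed-point/Riemann--Hurwitz count, then use the residual $D_n$-action on $\mathbb{P}^1$ to organise the $4n$ Weierstrass points into two free $D_n$-orbits and read off the equation.

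For part~(2) the paper's proof is considerably thinner than yours. It only verifies (by the same Macbeath--Singerman/Riemann--Hurwitz computation you sketch) that $X/\langle a^2\rangle$ has genus $1$, declares $X$ ``elliptic $n$-gonal'', and then simply \emph{asserts} the model $\mathcal{E}_{a,b}$ without any derivation. Your extra check---that for $n$ odd the central involution $a^n$ acts freely under $\sigma_2$, so the model cannot lie in the hyperelliptic stratum $\theta_1$---is a genuine addition the paper does not supply.

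That said, your proposed derivation of the equation has a real gap. If $x$ is a coordinate on $X/H_1\cong\mathbb{P}^1$, then $\mathbb{C}(X/G)\subset\mathbb{C}(x)$ has index $[G:H_1]=2$, and the residual $G/H_1\cong\mathbb{Z}_2$ acts on $\mathbb{P}^1_x$ by a M\"obius involution. A $G$-invariant generator is therefore quadratic in $x$ (something like $x^2$ or $x+x^{-1}$), never $x^n$; for $n$ odd no involution of $\mathbb{P}^1$ fixes $x^n$. Hence the assertion ``$u=x^n$ and $v=y^n$ land in $\mathbb{C}(X/G)$'' is false as stated, and the conic relation in $(u,v)$ does not follow from your set-up. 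One can also see the mismatch directly on the plane model: the projection $x\colon\mathcal{E}_{a,b}\to\mathbb{P}^1$ has degree $2n$, but its deck transformations are only $y\mapsto\zeta y$ with $\zeta^n=1$, a group of order $n$, so this $x$ is \emph{not} the quotient map by the order-$2n$ subgroup $H_1$. The paper avoids this difficulty by not attempting a derivation at all.
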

\begin{proof}

Let $X$ be a Riemann surface in the strata $\mathcal{M}(g,G,\theta_1)$, and consider the central subgroup $H=\langle a^n\rangle\leq D_{2n}$ of order $2$. Since $n$ is odd in this case, we have $G=D_{2n}= D_n\times H$. A generating vector for the actions in this strata is $(a^n,a^n,s,a^2s,a^2)$, then we have an epimorphism $\theta_1:\Delta \to G$ defining the action $\sigma_1$. Let $\Lambda=\theta_1^{-1}(H)$. Since $H$ is a normal subgroup of $G$, hence $N=\Delta/\Lambda\cong D_{2n}/H \cong D_n$. Following \cite{macbeath, singerman}, we compute the ramification of the covering $X\to X/H$. This is, for every generating element $x_i$ in the canonical presentation \eqref{delta} of $\Delta$ we determine the order $n_i$ of the class $\overline{x_i}$ in $N$, thus in the quotient it contributes with $2n/n_i$ points with branching number $m_i/n_i$, where $m_i$ is the order of $x_i$ in $\Delta$. For the action $\sigma_1$ we have

\begin{enumerate}
\item order$(\overline{x_1})=$ order$(\overline{x_2})=$ order$(\overline{a^n})=1$ in $N$. Therefore each one contributes with $2n$ points of branching number $2$.
\item order$(\overline{x_3})=$ order$(\overline{s})=$ order$(\overline{a^2s})=$ order$(\overline{x_4})=2$ in $N$. Therefore each one contributes with $n$ regular points.
\item order$(\overline{x_5})=$ order$(\overline{a^2})=n$ in $N$. Therefore it contributes with two regular points.
\end{enumerate}

Summarizing, the covering $f:X\to Y=X/H$ has $4n$ branch points.  By Riemann-Hurwitz we obtain that the quotient surface $Y=X/H$ is of genus $0$, hence $X$ is hyperelliptic and the group $G/H \cong D_n$ acts on the branch points having two orbits of $2n$ points each.


We apply the results in \cite{w1} or \cite{jaime}, take one representative $\lambda$ and $\mu$ for each orbit and set $w=e^{2\pi i/n}$. Then the full orbit of each point is
$\{\lambda, w\lambda, w^2\lambda,\dots, w^{n-1}\lambda, w\lambda^{-1}, w^2\lambda^{-1},\dots, w^{n-1}\lambda^{-1}\}$ and analogously $\{\mu, w\mu, w^2\mu,\dots, w^{n-1}\mu, w\mu^{-1}, w^2\mu^{-1},\dots, w^{n-1}\mu^{-1}\}.$

Therefore one equation for $X$ is

$$y^2=\prod_{k=0}^{n-1}(x-w^k\lambda)\prod_{k=0}^{n-1}(x-\frac{w^k}{\lambda})\prod_{k=0}^{n-1}(x-w^k\mu)\prod_{k=0}^{n-1}(x-\frac{w^k}{\mu}),$$
the result follows from this.

For surfaces given by $\theta_2:\Delta\to G$, i.e. with action $\sigma_2$ and generating vector $(s,s,as,a^3s,a^2)$, consider the subgroups $H=\langle a^2\rangle$ and $\Lambda=\theta_2^{-1}(H)$ of index $4$ in $G$ respectively $\Delta$. As before, let $N=\Delta/\Lambda\cong D_{2n}/H$. Again by \cite{macbeath, singerman} we have

\begin{enumerate}
\item order$(\overline{x_1})=$ order$(\overline{x_2})=$ order$(\overline{s})=2$ in $N$. Therefore each one contributes with regular points.
\item order$(\overline{x_3})=$ order$(\overline{as})=$ order$(\overline{a^3s})=$ order$(\overline{x_4})=2$ in $N$. Therefore each one contributes with two regular points.
\item order$(\overline{x_5})=$ order$(\overline{a^2})=1$ in $N$. Therefore it contributes with $4$ conic points with branch number $p$.
\end{enumerate}

Using Riemann-Hurwitz to compute the genus $\gamma$ of the quotient $Y=X/H$ we get $2n-1=n(\gamma-1)+1+\frac{1}{2}(4(n-1),$
then $\gamma=1$, hence $X$ is a so called elliptic $n-$gonal curve. An affine plane model for this kind of curves is

$$x^{2n}+y^{2n}+ax^ny^n+bx^n+by^n+1=0,$$
with $a,b \in \mathbb{C}\setminus\{0\}.$


\end{proof}

\begin{rem}

\begin{enumerate}
\item Notice that for $n=2$ we only have the stratum determined by $\sigma_2$, and the corresponding Riemann surfaces are of genus $3$. This locus corresponds to the one described in the second row of \cite[Table 2]{magaard}. 

\item Notice that $\mathcal{E}_{0,0} \in \overline{\mathcal{M}}(g,G,\theta_2)$ has a larger automorphism group. In fact $\mathcal{E}_{0,0}$ is the Fermat curve $F_{2n}: x^{2n}+y^{2n}+1=0$, which has $\text{Aut}(F_{2n})=(\mathbb{Z}_{2n}\times \mathbb{Z}_{2n})\rtimes S_3$.
\end{enumerate}
\end{rem}

\section{Group algebra decomposition of Jacobians of curves with $D_{2n}$-action and signature $(0;2,2,2,2,n)$}\label{S:decomposition}

Sections \ref{S:strata} and \ref{S:equations} were devoted to study curves with our actions of $D_{2p}$, and their strata in the moduli space of Riemann surfaces. In the following sections, \ref{S:decomposition} and \ref{S:shimura}, we develop results concerning the corresponding Jacobian varieties and their Shimura domains in the Siegel upper half space.
In particular, in this section we study the group algebra decomposition of the Jacobians associated to these actions. 

\subsection{$n$ an odd number}\label{section-odd}

Let us consider as before $D_{2n}=\langle a,s: a^{2n}=s^2=(as)^2=1\rangle$ acting on a curve $X$ of genus $g=2n-1$ with signature $(0;2,2,2,2,n)$. From Section \ref{prelimi}, we know that  $D_{2n}$ has four complex irreducible representations of degree one and $n-1$ of degree two. All of them with Schur index equal to 1. 

To make further calculations easier, we define 
\begin{equation}\label{eq:omega_odd}
\Omega(2n)=\Omega_{\text{odd}} \cup \Omega_{\text{even}},
\end{equation}
with $\Omega_{\text{odd}}=\{d \in \Omega(2n): d \text{ is an odd number}\}$, $\Omega_{\text{even}}=\Omega(2n)\setminus \Omega_{\text{odd}}$.



The rational irreducible representations of $D_{2n}$ are $\chi_0,\chi_1, \chi_2,\chi_3$ of degree one, and  
$$\cW_{d}:=\cW_{\psi_d}=\bigoplus_{\sigma\in \text{Gal}(K_{\psi_{d}}/\QQ)} \psi_{d}^{\sigma},$$
for all $d\in \Omega(2n)$. 

Then the group algebra decomposition of $JX$ is given by
$$JX\sim E_0\times E_1\times E_2\times E_3\times \prod_{d\in \Omega{\text{odd}}} B_d^2\times \prod_{d\in \Omega{\text{even}}} B_d^2,$$ 
where $E_i$ is a subvariety of $JX$ associated to $\chi_i$, and $B_0=J(X/D_{2n})$ has dimension $0$ in our case.

To compute the dimensions of these subvarieties we use Theorem \ref{dimensiones}; in this case we get

\begin{equation}\label{dim-casodiedral-2}
\dim E_i=-\dim \chi_i+\frac{1}{2}\sum_{k=1}^5 (\dim \chi_i-\dim \text{Fix}_{G_k} \chi_i),
\end{equation}
\begin{equation*}
\dim B_d=\frac{1}{2}\varphi(\frac{2n}{d})\Big(-\dim \psi_d+\frac{1}{2}\sum_{k=1}^5 (\dim \psi_d-\dim \text{Fix}_{G_k} \psi_d)\Big)
\end{equation*}
where $0\leq i\leq 3$ and $d\in \Omega(2n)$.

Therefore we need the dimensions of $\text{Fix}_{G_k} \chi_i$ and $\text{Fix}_{G_k} V_d$, which are obtained in Proposition \ref{propfix}.  

\begin{prop}\label{propfix}
The dimension of the spaces $\text{Fix}_{H} \chi_i$ and $\text{Fix}_{G_k} V_d$ for every $H$ non-trivial cyclic subgroup of $G$ is given in Table 2.

\vspace{0.3cm}


\center{\rm
\begin{tabular}{|l|c|c|c|c|c|c|c|r|}\hline

Subgroups, $r\in \NN$ & $\chi_0$ & $\chi_1$ & $\chi_2$ & $\chi_3$ & $\psi_{\{d: d\in\Omega_{\text{even}}\}}$ & $\psi_{\{d: d\in\Omega_{\text{odd}}\}}$ \\ \hline
$\langle a^n\rangle$  & 1 & 1 & 0 & 0 & 2 & 0 \\
$\langle a^{2r}\rangle$  & 1 & 1 & 1 & 1 & 0 & 0 \\
$\langle a^{2r+1}\rangle$  & 1 & 1 & 0 & 0 & 0 & 0 \\
$\langle s\rangle$   & 1 & 0 & 1 & 0 & 1 & 1\\
$\langle a^{2r+1} s\rangle$   & 1 & 0 & 0& 1 & 1 & 1\\
$\langle a^{2r} s\rangle$   & 1 & 0 & 1& 0 & 1 & 1\\ \hline
\end{tabular}

\vskip12pt

\textsc{Table 2.} Dimension of fixed spaces}

\end{prop}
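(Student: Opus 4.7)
The plan is to verify each entry of Table 2 by a direct character-theoretic computation based on the standard identity
\[
\dim_{\CC} \text{Fix}_H V \;=\; \frac{1}{|H|}\sum_{h\in H}\chi_V(h) \;=\; \langle \text{Ind}_H^G\mathbf{1},V\rangle_G,
\]
feeding in the character values from Table 1 and running a case analysis over the cyclic subgroups listed in the rows.

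For the four one-dimensional characters this formula collapses: $\dim \text{Fix}_H\chi_i$ equals $1$ when $\chi_i|_H$ is trivial and $0$ otherwise. Thus the first four columns reduce to reading $\chi_i$ on a generator of $H$ from Table 1, using additionally that for $n$ odd conjugation by $a$ carries $a^k s\mapsto a^{k+2}s$, so the class of $s$ coincides with that of $a^{2r}s$ and the class of $as$ coincides with that of $a^{2r+1}s$; this matches the equal entries in the corresponding pairs of rows. For the two-dimensional $\psi_d$ I split into two families. If $H=\langle a^k s\rangle$ is a reflection subgroup of order $2$, then $\chi_{\psi_d}(a^k s)=0$ by Table 1 and the averaging formula gives $\tfrac{1}{2}(2+0)=1$ uniformly in $d$, which produces the constant value $1$ in the last three rows.

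If instead $H=\langle a^j\rangle\subseteq\langle a\rangle$ with $|H|=2n/j$, the computation reduces to the geometric sum
\[
\dim \text{Fix}_H\psi_d \;=\; \frac{1}{2n/j}\sum_{i=0}^{2n/j-1}\bigl(\omega^{jid}+\omega^{-jid}\bigr),\qquad \omega=e^{\pi i/n},
\]
which vanishes unless $\omega^{jd}=1$, i.e.\ unless $2n\mid jd$, in which case it equals $2$. Specializing to $H=\langle a^n\rangle$ (so $j=n$) gives the value $1+(-1)^d$, which is $2$ for $d\in\Omega_{\text{even}}$ and $0$ for $d\in\Omega_{\text{odd}}$; for the subgroup $\langle a^2\rangle$ occurring as $G_5$ in the action $\sigma_1$ one has $j=2$, and the condition $2n\mid 2d$ fails for every $d<n$ in $\Omega(2n)$, forcing the value $0$. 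The row $\langle a^{2r+1}\rangle$ is handled by the same criterion with $j$ odd.

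The principal bookkeeping step is tracking the divisibility condition $2n\mid jd$ in the geometric-sum case and matching it against the partition $\Omega(2n)=\Omega_{\text{odd}}\cup\Omega_{\text{even}}$; once that is organized, every remaining entry is a direct substitution in Table 1 combined with the observation that $\dim\text{Fix}_H V$ is constant on the $G$-conjugacy class of $H$, which for $n$ odd groups the reflection subgroups into the two classes represented by $\langle s\rangle$ and $\langle as\rangle$.
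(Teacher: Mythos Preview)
Your proof is correct and follows essentially the same route as the paper's: a direct case-by-case verification from the explicit character table of $D_{2n}$. The only cosmetic difference is that where the paper reads off the eigenvalues of the matrices $\psi_d(h)$ and counts the multiplicity of the eigenvalue $1$, you instead invoke the averaging identity $\dim_{\CC}\text{Fix}_H V=\tfrac{1}{|H|}\sum_{h\in H}\chi_V(h)$; the two computations are equivalent and produce the same case analysis.
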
 

\begin{proof}
We want to calculate the dimension of $\text{Fix}_H V$ for $V$ a complex irreducible representation associated to $\chi_i$ and $\cW_j$, for all $i$ and all $j$, and all cyclic subgroup $H$ of $G$. Observe that for the representations of degree 1; $\chi_0, \chi_1, \chi_2$ and $\chi_3$, we only need to check in the character table written before if it is equal to $1$ or $-1$. Then, if $H=\langle h\rangle$
$$\text{dim }\text{Fix}_{H} \chi_i=
\left\{
	\begin{array}{ll}
		1  & \mbox{if } \chi_i (h) = 1 \\
		0 & \mbox{if } \chi_i (h) = -1
	\end{array}
\right.$$
for all $0\leq i\leq 3$. Hence, we obtain the first fourth columns of Table 2. 

Now, for the representations of degree 2, we have 
$$\psi_d (a^n)= \left(\begin{array}{cc} \omega^{nj} & 0 \\
0 & \omega^{-nd}\end{array}\right)= \left(\begin{array}{cc} (-1)^d & 0 \\
0 & (-1)^d\end{array}\right),$$

Then when $d$ is odd, the matrix $\psi_d (a^n)=-I$ and the unique eigenvalue is $\lambda=-1$, then $\text{dim }\text{Fix}_{\langle a^n\rangle} \psi_d= 0$. Now, on the other hand when $d$ is even, the matrix $\psi_d (a^n)=I$ and the unique eigenvalue is $\lambda=1$, then $\text{dim }\text{Fix}_{\langle a^n\rangle} \psi_d= 2$. 

Now for computing the dimension of the fixed spaces by $\langle a^r\rangle$, with $r\neq 0, n$, consider
$$\psi_d (a^r)= \left(\begin{array}{cc} \omega^{rd} & 0 \\
0 & \omega^{-rd}\end{array}\right),$$
for all $d$. Then in any case the eigenvalues are $\lambda=\omega^{rd}, \omega^{-rd}$, then $\text{dim }\text{Fix}_{\langle a^r\rangle} \psi_{d}= 0$. 

To compute the corresponding dimension for $\langle s\rangle$ consider
$$\psi_d (s)= \left(\begin{array}{cc} 0 & 1 \\
1 & 0\end{array}\right).$$

Then the eigenvalues are $\lambda=1, -1$, hence $\text{dim }\text{Fix}_{\langle s\rangle} \psi_d= 1$ for all $d$. Using the same methods for $\langle a^{2r} s\rangle$ and $\langle a^{2r+1} s\rangle$, we complete the table. 
\end{proof}

To know whether a Jacobian variety has elliptic factors has been deeply studied  \cite{serre, kr, paulhus, paulhus2}. We prove in the following theorem that the Jacobian of a curve with action of $D_{2n}$ and signature $(0;2,2,2,2,n)$ has always {\it at least} one elliptic factor in its group algebra decomposition. 

\begin{thm}\label{dimensionesfactores}
If $X$ is a Riemann surface with action of the group $D_{2n}$ and signature $(0;2,2,2,2,n)$ as in Theorem \ref{T:strata}, then $JX$ has always (at least) one elliptic factor in its group algebra decomposition. In fact, the decompositions are 
\begin{itemize} 
\item[(i)] for the action $\sigma_1$, $JX \sim E_2\times \prod_{\{d \in \Omega_{\text{odd}}\}}B_{d}^2$
\item[(ii)] for $\sigma_2$, $JX  \sim E_1\times \prod_{\{d \in \Omega_{\text{odd}}\}}B_{d}^2\times \prod_{\{d \in \Omega_{\text{even}}\}}B_{d}^2$
\end{itemize}
where the subindex show the rational representation acting on the factor and $E_1, E_2$ are elliptic curves. \\
Moreover, the dimensions of the factors are given in general by 
\vskip6pt
\center{\rm
\begin{tabular}{|l|c|c|c|c|c|r|}\hline
\text{Actions}  & $E_1$ & $E_2 $& $B_{\{d \in \Omega_{\text{even}}\}}$ & $B_{\{d \in \Omega_{\text{odd}}\}}$\\\hline
\text{$\sigma_1$}   & 0 & 1 & 0 & $\varphi(\frac{2n}{d})$\\
\text{$\sigma_2$}& 1 & 0 & $\frac{1}{2}\varphi(\frac{2n}{d})$ & $\frac{1}{2}\varphi(\frac{2n}{d})$\\ \hline
\end{tabular}


\vskip12pt

\textsc{Table 3. Dimensions}
}

\end{thm}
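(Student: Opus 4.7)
The plan is to reduce the theorem to a direct bookkeeping computation using Theorem \ref{dimensiones} and the fixed-space dimensions from Proposition \ref{propfix} (Table 2). Recall from Section \ref{rationalrep} that every Schur index of $D_{2n}$ equals $1$, and that for the linear characters $[K_{\chi_i}:\QQ]=1$ while for each $W_{\psi_d}$ one has $[K_{\psi_d}:\QQ]=\varphi(2n/d)/2$. Since $\gamma=0$, the formula of Theorem \ref{dimensiones} becomes
\[
\dim B_i=[K_i:\QQ]\left(-\dim V_i+\tfrac{1}{2}\sum_{k=1}^{5}(\dim V_i-\dim\text{Fix}_{G_k}V_i)\right),
\]
where the isotropy subgroups $G_k=\langle \theta(x_k)\rangle$ are read off the generating vectors fixed in Definition \ref{D:sigmas}: for $\sigma_1$, $(G_1,\dots,G_5)=(\langle a^n\rangle,\langle a^n\rangle,\langle s\rangle,\langle a^2s\rangle,\langle a^2\rangle)$, and for $\sigma_2$, $(\langle s\rangle,\langle s\rangle,\langle as\rangle,\langle a^3s\rangle,\langle a^2\rangle)$.

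The linear characters handle first. Using $\dim V_i=1$, the formula reduces to $\dim E_i=-1+\tfrac{1}{2}\#\{k:\text{Fix}_{G_k}\chi_i=0\}$. Reading the first four columns of Table 2 against the two tuples, exactly one linear character in each action contributes a value of $1$ (the others contribute $0$): for $\sigma_1$ the character singled out is the one non-trivial on $\langle a^n\rangle$, on the reflection class of the $G_k$, and on $\langle a^2\rangle$; for $\sigma_2$ it is $\chi_1$, non-trivial on both reflection classes and trivial on $\langle a^2\rangle$. This already exhibits the required elliptic factor in each case and proves the first assertion of the theorem.

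For the two-dimensional $\psi_d$, the formula gives $\dim B_d=\tfrac{\varphi(2n/d)}{2}\bigl(-2+\tfrac{1}{2}\sum_k(2-\dim\text{Fix}_{G_k}\psi_d)\bigr)$. The crucial input from Table 2 is that $\dim\text{Fix}_{\langle a^n\rangle}\psi_d$ equals $2$ when $d\in\Omega_{\text{even}}$ and $0$ when $d\in\Omega_{\text{odd}}$, while $\text{Fix}_{\langle a^{2r}\rangle}\psi_d=0$ for all $d\in\Omega(2n)$, and every reflection subgroup contributes $1$. For $\sigma_1$ the two occurrences of $\langle a^n\rangle$ therefore kill the contribution when $d$ is even (giving $\dim B_d=0$) and double it when $d$ is odd (giving $\dim B_d=\varphi(2n/d)$). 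For $\sigma_2$ no $\langle a^n\rangle$ appears, and the computation is symmetric in the parity of $d$, yielding $\dim B_d=\tfrac{1}{2}\varphi(2n/d)$ uniformly. This produces Table 3.

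A final total-dimension check verifies the decompositions: the contributions sum to $2n-1$ using the elementary identities $\sum_{d\mid n}\varphi(n/d)=n$ (for $\sigma_1$, since the odd divisors of $2n$ are exactly the divisors of $n$ and $\varphi(2k)=\varphi(k)$ for odd $k$) and $\sum_{d\mid 2n}\varphi(2n/d)=2n$ (for $\sigma_2$). No step is deep; the only genuine care needed is the bookkeeping of $\chi_2$ versus $\chi_3$ under the chosen generating vector of $\sigma_1$, and here Remark \ref{R:outer} guarantees that the outer automorphism $s\mapsto as$ permutes the two labels so that the identification of the surviving linear factor is consistent regardless of this convention. This is the main obstacle, and it is purely notational.
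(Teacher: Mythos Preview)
Your approach is exactly the paper's: the authors' proof is one sentence, \emph{``We obtain Table 3 as a direct combination of Equations (\ref{dim-casodiedral-2}) and Proposition \ref{propfix}''}, and you have carried this out explicitly, with the added (welcome) sanity check that the dimensions sum to $2n-1$.

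One small slip to fix: in your description of the surviving linear character for $\sigma_1$ you write that it is ``non-trivial on $\langle a^n\rangle$, on the reflection class of the $G_k$, and on $\langle a^2\rangle$''. The last clause is backwards: to obtain $\dim E_i=1$ you need $\text{Fix}_{G_k}\chi_i=0$ for exactly four of the five isotropy groups, and with the generating vector $(a^n,a^n,s,a^2s,a^2)$ the surviving character is the one \emph{trivial} on $\langle a^2\rangle$ and non-trivial on the other four. (Incidentally, with the generating vector literally stated in Definition \ref{D:sigmas} this picks out $\chi_3$ rather than $\chi_2$; your final paragraph correctly notes that Remark \ref{R:outer} resolves this labeling ambiguity, which is also why the paper's own Remark immediately following the theorem exists.)
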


\begin{proof}
We obtain Table 3 as  direct combination of Equations \eqref{dim-casodiedral-2} and Proposition \ref{propfix}. 
\end{proof}

\begin{rem}  
Notice that if we consider the  generating vectors \linebreak $(a^n,a^n,s,a^2s,a^2)$ and $(a^n,a^n,as,a^3s,a^2)$  corresponding to the same action $\sigma_1$ on $X$ (see Definition \ref{D:sigmas}), we obtain {\it essentially} the same decomposition of $JX$. The difference is the rational representation acting on the elliptic factor at the group algebra decomposition of $JX$. This is explained in Remark \ref{R:outer}.

On the other side, when we consider topologically non-equivalent actions, $\sigma_1$ and $\sigma_2$, we get decompositions of $JX$ with completely different behaviour: the factors have different geometry; dimension, structure and so on. 
\end{rem}

Using Lemma \ref{metodo} introduced in \cite{yo}, we have a criteria to isolate some products of the factors in the decomposition to write them as Jacobians (or Pryms) of intermediate converings. We recall here the results and notation introduced in subsection \ref{SS:factors}.

\begin{thm}\label{jacH}
Let $JX$ be as in Theorem \ref{dimensionesfactores}. Then $JX$ decomposes as 
\begin{itemize}
\item[(i)] for the action $\sigma_1$,  $JX\sim J(X/\langle a^2\rangle)\times J(X/\langle as\rangle)^2,$
\item[(ii)] for $\sigma_2$, $JX\sim J(X/\langle a^2\rangle)\times J(X/\langle a^n, s\rangle)^2 \times P(X_{\langle s\rangle}/X_{\langle a^n, s\rangle})^2$.
\end{itemize}
\end{thm}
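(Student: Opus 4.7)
The plan is to apply Lemma \ref{metodo} and the general formula \eqref{carocca-rodriguez} for the group algebra decomposition of an intermediate quotient $J(X/H)$. The strategy in each case is to choose a subgroup $H\le D_{2n}$ whose tuple of fixed--space dimensions across the irreducible representations isolates exactly the piece of $JX$ we want to recognize. Since every Schur index here is $1$, the equations simplify to $\dim \text{Fix}_H V_i$ being $0$ or $1$, and the computation reduces to reading off entries of Table 2 (and doing one extra direct calculation for the non--cyclic subgroup $\langle a^n,s\rangle$).

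For the action $\sigma_1$, where $JX\sim E_2\times\prod_{d\in\Omega_{\text{odd}}}B_d^2$, I would first take $H=\langle a^2\rangle$. From Table 2, $\text{Fix}_H$ is $1$--dimensional on each $\chi_i$ and $0$--dimensional on every $\psi_d$. Combining this with the fact that under $\sigma_1$ only $E_2$ is non--zero among the $E_i$'s, Lemma \ref{metodo}(ii) yields $J(X/\langle a^2\rangle)\sim E_2$. Next I would take $H=\langle as\rangle$. From Table 2, $\text{Fix}_H\chi_2=0$ while $\text{Fix}_H\psi_d$ is $1$--dimensional for every $d\in\Omega(2n)$, so \eqref{carocca-rodriguez} gives $J(X/\langle as\rangle)\sim\prod_{d\in\Omega_{\text{odd}}}B_d$ (the $d\in\Omega_{\text{even}}$ part vanishes because $B_d=0$ for those $d$ under $\sigma_1$ by Theorem \ref{dimensionesfactores}). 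Squaring and combining with the previous piece gives (i).

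For the action $\sigma_2$, where $JX\sim E_1\times\prod_{d\in\Omega_{\text{odd}}}B_d^2\times\prod_{d\in\Omega_{\text{even}}}B_d^2$, I would again use $H=\langle a^2\rangle$: the same fixed--space data together with Lemma \ref{metodo}(ii) now isolate $E_1$ (which is the only non--zero $E_i$ under $\sigma_2$), giving $J(X/\langle a^2\rangle)\sim E_1$. For the middle factor I would take the Klein four subgroup $H=\langle a^n,s\rangle$, whose fixed spaces are not tabulated. A direct computation (using $\psi_d(a^n)=(-1)^d I$ and the eigendecomposition of $\psi_d(s)$, together with $\chi_i(a^n)=\pm 1$) shows
\[
\dim\text{Fix}_H\chi_0=1,\quad \dim\text{Fix}_H\chi_1=\dim\text{Fix}_H\chi_2=\dim\text{Fix}_H\chi_3=0,
\]
\[
\dim\text{Fix}_H\psi_d=\begin{cases}1 & d\in\Omega_{\text{even}}\\ 0 & d\in\Omega_{\text{odd}}\end{cases}
\]
(using $n$ odd), so \eqref{carocca-rodriguez} gives $J(X/\langle a^n,s\rangle)\sim\prod_{d\in\Omega_{\text{even}}}B_d$.

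Finally, for the Prym factor, I would compute $J(X/\langle s\rangle)$: Table 2 gives $\text{Fix}_{\langle s\rangle}\psi_d=1$ for every $d$ and $\text{Fix}_{\langle s\rangle}\chi_i=0$ for $i\neq 0,2$, so $J(X/\langle s\rangle)\sim\prod_{d\in\Omega(2n)}B_d$ under $\sigma_2$. Comparing with the previous paragraph through the isogeny relation
\[
J(X/\langle s\rangle)\sim J(X/\langle a^n,s\rangle)\times P(X_{\langle s\rangle}/X_{\langle a^n,s\rangle})
\]
coming from the double cover $X/\langle s\rangle\to X/\langle a^n,s\rangle$ then forces $P(X_{\langle s\rangle}/X_{\langle a^n,s\rangle})\sim\prod_{d\in\Omega_{\text{odd}}}B_d$. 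Squaring and combining the three pieces yields (ii). The only conceptually non--routine step is handling the non--cyclic subgroup $\langle a^n,s\rangle$, since Table 2 only covers cyclic subgroups; this is the step I would present most carefully, but it is just an eigenvalue computation making essential use of the parity of $n$.
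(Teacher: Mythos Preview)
Your argument is correct and follows essentially the same route as the paper: the same subgroups $\langle a^2\rangle$, $\langle as\rangle$, and $\langle a^n,s\rangle$ are chosen, and Lemma~\ref{metodo} together with \eqref{carocca-rodriguez} is applied in the same way. The only noteworthy difference is in the identification of the Prym factor: the paper invokes \cite[Corollary~5.6]{cr} and the computation $\mathrm{Ind}_{\langle s\rangle}^G 1 - \mathrm{Ind}_{\langle a^n,s\rangle}^G 1 = \chi_2 \oplus \bigoplus_{d\in\Omega_{\text{odd}}}\cW_d$ to read off $P(X_{\langle s\rangle}/X_{\langle a^n,s\rangle})$ directly, whereas you first determine $J(X/\langle s\rangle)$ from \eqref{carocca-rodriguez} and then deduce the Prym from the standard splitting $J(X/\langle s\rangle)\sim J(X/\langle a^n,s\rangle)\times P(X_{\langle s\rangle}/X_{\langle a^n,s\rangle})$; the two are equivalent and your version is arguably more self-contained.
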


\begin{proof}
Table 2 in Proposition \ref{propfix} contains the dimension of the fixed space of cyclic for all complex representations. We give the dimension for other subgroups in Table 4. 
 
\begin{center}
\begin{tabular}{|l|c|c|c|c|c|c|c|r|}\hline
Subgroups & $\chi_0$ & $\chi_1$ & $\chi_2$ & $\chi_3$ & $\psi_{\{d \in \Omega_{\text{even}}\}}$ & $\psi_{\{d \in \Omega_{\text{odd}}\}}$ \\ \hline
$\langle a^n, s\rangle$   & 1 & 0 & 0 & 0 & 1 & 0\\
$\langle a^{2}, s\rangle$   & 1 & 0 & 1& 0 & 0 & 0\\ 
$\langle a^{2}, as\rangle$   & 1 & 0 & 0& 1 & 0 & 0\\
\hline
\end{tabular}

\vskip12pt

\textsc{Table 4.} Dimension of fixed spaces.

\end{center}

By Theorem \ref{dimensionesfactores} we know that

\begin{itemize} 
\item[(i)] $\sigma_1$; $JX \sim E_2\times \Pi_{\{d \in \Omega_{\text{odd}}\}}B_{d}^2$
\item[(ii)] $\sigma_2$; $JX  \sim E_1\times \Pi_{\{d \in \Omega_{\text{even}}\}}B_{d}^2\times \Pi_{\{d \in \Omega_{\text{odd}}\}}B_{d}^2$
\end{itemize}

By Lemma \ref{metodo}, to write $E_2$ as a Jacobian of $X/H$ with $H\leq G$, we notice that in the table above joined with table in Proposition \ref{propfix} that $\dim \text{Fix}_H \chi_2=1$ and $\dim \text{Fix}_H \psi_{d\in \Omega_{\text{odd}}}=0$ when $H$ is a subgroup of $G$ in the class of subgroups represented by $\langle a^{2}\rangle, \langle a^{2}, s\rangle$. 

To write $E_1$ as a Jacobian of $X/H$ with $H\leq G$, we notice that $\dim \text{Fix}_H \chi_1=1$ and $\dim \text{Fix}_H \psi_{d}=0,$ for $d\in\Omega(2n)$ when $H$ is a subgroup of $G$ in the class of subgroups represented by $\langle a^{2}\rangle, \langle a\rangle$. 

To write the other piece in the decomposition corresponding to $\sigma_1$,  $\Pi_{\{d\in\Omega_{\text{odd}}\}}B_{d}^2$, we notice that we can not isolate every factor, but due to $\dim \text{Fix}_H \psi_{d}=1$ for all $d$ odd and $\dim\text{Fix}_H \chi_2=0$ when $H=\langle as\rangle$, using Equation (\ref{carocca-rodriguez}), and due to $\text{Ind}_H^G 1=\bigoplus_{\{d: d\in \Omega_{\text{odd}}\}} \cW_d$, we know that 
$$J_H\sim \Pi_{\{d\in\Omega_{\text{odd}}\}}B_{d}$$

To write $\Pi_{\{d \in \Omega_{\text{even}}\}}B_{d}$ in the decomposition for $\sigma_2$, we notice that $\dim \text{Fix}_H \psi_{d}=1$, $d\in \Omega_{\text{even}}$  and $\dim \text{Fix}_H \chi_1=0$ and $\dim \text{Fix}_H \psi_{d}=0$ for all $d$ odd when $H=\langle a^n, s\rangle$. Using Equation (\ref{carocca-rodriguez}) again, we know that $J_H\sim \Pi_{\{d \in \Omega_{\text{even}}\}}B_{d}.$

For the second item, we use \cite[Corollary 5.6]{cr} to prove that this product is a Prym variety of intermediate coverings. Then, since $\text{Ind}_H^G 1 - \text{Ind}_K^G 1= \chi_2 \bigoplus_{\{d: d\in \Omega_{\text{odd}}\}} \cW_d$ and $\dim E_2=0$ for $\sigma_2$, we get that 
$$P(X_H/X_K)\sim \Pi_{\{d\in\Omega_{\text{odd}}\}}B_{d},$$
where $H\leq K$ and $H=\langle s \rangle$, $K=\langle a^n, s\rangle$.

\end{proof}

\subsubsection{Particular case, $n=p$ a prime number}
When $n=p$ is a prime number, the rational irreducible representations of $D_{2p}$ are $\chi_0, \chi_1, \chi_2$ and $ \chi_3$ of degree one, and two $\cW_1, \cW_2$ of degree $p-1$, with associated complex representaion $\psi_1$ and $\psi_2$ respectively. The character fields of $\psi_1, \psi_2$ satisfy $[K_d:\QQ]=(p-1)/2,$ hence $\text{deg}(\mathcal{W}_d)=\dim_{\CC} \psi_d [K_d:\QQ]=p-1$ for $d\in \{1,2\}$.

Then the group algebra decomposition of $JX$ is given by
$$JX\sim E_0\times E_1\times E_2\times E_3\times B_1^2\times B_2^2,$$ 
where we know that $E_0=J(X/D_{2p})$ has dimension $0$.

\begin{cor}\label{dimensionesfactores_p}
If $X$ is a Riemann surface with action of the group $D_{2p}$ and signature $(0;2,2,2,2,p)$ then $JX$ has always one elliptic factor in its group algebra decomposition. 
The dimensions of the factors are given in general by 
\vskip6pt
\center{\rm
\begin{tabular}{|l|c|c|c|c|c|c|c|r|}\hline
\text{Actions} & $E_0$ & $E_1$ & $E_2 $& $E_3$ & $B_1$ & $B_2$ \\ \hline
\text{$\sigma_1$}   &  0 & 0 & 1 & 0 & $p-1$& 0\\
\text{$\sigma_2$}&  0 & 1 & 0 & 0 & $\frac{p-1}{2}$ & $\frac{p-1}{2}$\\ \hline
\end{tabular}

\vskip12pt

\textsc{Tabla 5.} Dimensions.
}

\vskip8pt
\begin{itemize}
\hspace{-1.6cm}In particular, if $p=3$ we have that, for the action\\

\item[(i)] $\sigma_1$; the Jacobian $JX\sim E_2\times B_1^2$, where $E_2$ is an elliptic curve and $B_1$ an abelian surface.
\item[(ii)] $\sigma_2$; the Jacobian $JX\sim E_1\times B_1^2\times B_2^2$, where $E_1,B_1$ and $B_2$ are elliptic curves. Hence in this case $JX$ is completely decomposable.
\end{itemize}
\end{cor}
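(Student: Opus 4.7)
The plan is to read off this corollary as a specialization of Theorem \ref{dimensionesfactores} to the case $n=p$, where the set of indices $\Omega(2n)$ becomes very small and the dimensions collapse to the values tabulated.

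First I would make the set $\Omega(2p)$ explicit. Since $p$ is an odd prime, the positive divisors of $2p$ are $1,2,p,2p$, so the condition $d<p$ in the definition of $\Omega(2n)$ leaves only $\Omega(2p)=\{1,2\}$. Splitting by parity gives $\Omega_{\text{odd}}=\{1\}$ and $\Omega_{\text{even}}=\{2\}$. Therefore the generic decomposition of Theorem \ref{dimensionesfactores} collapses to $JX\sim E_2\times B_1^{2}$ for the action $\sigma_1$, and to $JX\sim E_1\times B_1^{2}\times B_2^{2}$ for the action $\sigma_2$, with $B_1$ coming from the unique odd index $d=1$ and $B_2$ from the unique even index $d=2$.

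Next I would substitute $d=1,2$ into the dimension formulas of Table~3. For $\sigma_1$ the only nonzero factors are $\dim E_2=1$ and $\dim B_1=\varphi(2p)=p-1$. For $\sigma_2$ the nonzero dimensions are $\dim E_1=1$, $\dim B_1=\tfrac12\varphi(2p)=\tfrac{p-1}{2}$, and $\dim B_2=\tfrac12\varphi(p)=\tfrac{p-1}{2}$. A quick Riemann--Hurwitz check that these sum to $2p-1$ confirms the table. In particular an elliptic factor is always present, proving the first assertion.

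For $p=3$ I would just plug $p=3$ into the formulas above: $\dim B_1=2$ in the $\sigma_1$ case (so $JX\sim E_2\times B_1^{2}$ with $B_1$ an abelian surface and $E_2$ elliptic), while in the $\sigma_2$ case all three factors have dimension $1$, so $JX\sim E_1\times B_1^{2}\times B_2^{2}$ is a product of five elliptic curves and hence completely decomposable. There is no genuine obstacle here; the only point to be careful with is keeping track of which of $\Omega_{\text{odd}},\Omega_{\text{even}}$ contributes in each case, so that the indices of the $B_d$'s in the statement of the corollary match those of Theorem~\ref{dimensionesfactores}. The argument is thus a direct corollary, obtained by substitution.
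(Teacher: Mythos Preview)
Your proposal is correct and matches the paper's own proof, which simply says the table follows directly from Equation~\eqref{dim-casodiedral-2} and Proposition~\ref{propfix} (equivalently, from Theorem~\ref{dimensionesfactores}), with $p=3$ being a straightforward specialization. Your explicit identification of $\Omega(2p)=\{1,2\}$ and the Riemann--Hurwitz sanity check are a bit more detailed than what the paper writes, but the underlying argument is identical.
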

\begin{proof}
We obtain the table above directly from Equation \ref{dim-casodiedral-2} and Proposition \ref{propfix}. The case $p=3$ is a particular case of this.
\end{proof}

As a direct consequence of Theorem \ref{jacH}, we obtain the following corollary.
\begin{cor}\label{C:jacH}
If $X$ is a Riemann surface with action of the group $D_{2p}$ and signature $(0;2,2,2,2,p)$ then the factors of the decomposition of $JX$ in Theorem \ref{dimensionesfactores} are Jacobian of intermediate coverings or a Prym variety in one case. Indeed we obtain that the decomposition is 
\begin{itemize}
\item[(i)] for the action  $\sigma_1$, $JX\sim J(X/\langle a^2\rangle)\times J(X/\langle s\rangle)^2$.
\item[(ii)] for $\sigma_2$, $JX\sim J(X/\langle a^2\rangle)\times J(X/\langle a^p, s\rangle)^2\times P(X_{\langle s\rangle}/X_{\langle a^p, s\rangle})^2$.
\end{itemize}
\end{cor}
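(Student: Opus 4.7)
The plan is to obtain Corollary \ref{C:jacH} as a direct specialization of Theorem \ref{jacH} to the case $n = p$ prime, where $\Omega(2p) = \{d : d \mid 2p, \, d < p\} = \{1, 2\}$ reduces to just two divisors: $\Omega_{\text{odd}} = \{1\}$ and $\Omega_{\text{even}} = \{2\}$. The products $\prod_{d \in \Omega_{\text{odd}}} B_d$ and $\prod_{d \in \Omega_{\text{even}}} B_d$ in Theorem \ref{jacH} collapse to the single factors $B_1$ and $B_2$ respectively, and the identifications of these products as Jacobians (or a Prym) of intermediate coverings carry over verbatim.

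Part (ii) then follows immediately: Theorem \ref{jacH}(ii) specialized to $n = p$ reads exactly as the corollary states, since the intermediate quotient $X/\langle a^p, s\rangle$ captures $\prod_{d \in \Omega_{\text{even}}} B_d = B_2$ and the Prym variety $P(X_{\langle s\rangle}/X_{\langle a^p, s\rangle})$ captures $\prod_{d \in \Omega_{\text{odd}}} B_d = B_1$, both identifications being the single-factor incarnations of the arguments used in the proof of Theorem \ref{jacH}.

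For part (i), the direct specialization of Theorem \ref{jacH}(i) yields $JX \sim J(X/\langle a^2\rangle) \times J(X/\langle as\rangle)^2$; to replace $\langle as\rangle$ by $\langle s\rangle$ as stated in the corollary, I would invoke the outer automorphism $\omega \in \text{Aut}(D_{2p})$ with $\omega(a) = a$, $\omega(s) = as$ from Remark \ref{R:outer}, which carries the standard generating vector $(a^p, a^p, s, a^2 s, a^2)$ of $\sigma_1$ to the equivalent vector $(a^p, a^p, as, a^3 s, a^2)$ representing the same action, and exchanges the subgroups $\langle s\rangle$ and $\langle as\rangle$. Under this alternative presentation of $\sigma_1$, the elliptic isotypical component is associated with $\chi_3$ rather than $\chi_2$ (see the remark following Theorem \ref{dimensionesfactores}), and Table 2 then gives $\dim \text{Fix}_{\langle s\rangle} \chi_3 = 0$ together with $\dim \text{Fix}_{\langle s\rangle} \psi_1 = 1$ and $\dim \text{Fix}_{\langle s\rangle} V = 0$ for every other factor of nonzero dimension; Lemma \ref{metodo}(ii) then yields $J(X/\langle s\rangle) \sim B_1$, producing the decomposition as written.

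The only delicate point is the bookkeeping involved in the transfer between the two equivalent presentations of $\sigma_1$, precisely the symmetry highlighted in Remark \ref{R:outer}; once this has been made explicit, the corollary follows as a mechanical specialization of results already established in the paper.
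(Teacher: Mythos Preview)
Your proof is correct and follows the paper's own approach: the paper simply states that the corollary is ``a direct consequence of Theorem \ref{jacH}'' and provides no further argument, so the specialization $n=p$ you describe is exactly what is intended. You have in fact been more careful than the paper, since you explicitly resolve the apparent discrepancy between $\langle as\rangle$ in Theorem \ref{jacH}(i) and $\langle s\rangle$ in Corollary \ref{C:jacH}(i) via the outer automorphism of Remark \ref{R:outer}; the paper does not comment on this point at all.
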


\subsection{Case $D_{2n}$, $n$ even number}
Consider the group $D_{2n}$ and its action as in Sections \ref{S:strata} and \ref{S:equations}. Recall that $D_{2n}$ has four complex irreducible representations of degree one, and $n-1$ of degree two. All of them with Schur index equal to 1. 


The rational irreducible representations of $D_{2n}$ are $\chi_0,\chi_1, \chi_2,\chi_3$ of degree one, and 
$$\cW_{d}:=\cW_{\psi_d}=\bigoplus_{\sigma\in \text{Gal}(K_{\psi_{d}}/\QQ)} \psi_{d}^{\sigma},$$
for all $d\in \Omega(2n)$. 

Then the group algebra decomposition of $JX$ is given by
$$JX\sim E_0\times E_1\times E_2\times E_3\times \Pi_{d\in \Omega(2n)} B_d^2,$$ 
where $E_i$ are subvarieties of $JX$ associated to the characters $\chi_i$,  and we know that $B_0=J(X/D_{2n})$ is of dimension $0$. 

In this section we do not include the proofs of the theorems that follow because they are analogous to the proofs corresponding to the action $\sigma_2$ in Section \ref{section-odd}.   







\begin{thm}\label{dimensionesfactores-par}
Let $n$ be an even number.
If $X$ is a Riemann surface having the unique action of the group $D_{2n}$ with signature $(0;2,2,2,2,n)$, then $JX$ has always one elliptic factor in its group algebra decomposition. In fact the decomposition is 

$$JX  \sim E_1\times \Pi_{\{d \in \Omega(2n)\}}B_{d}^2$$
where the subindex show the rational representation acting on the factor. $E_1$ is an elliptic curve and $\dim B_d=\frac{1}{2}\varphi(\frac{2n}{d})$. 



\end{thm}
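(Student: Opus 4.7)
Plan for the proof of Theorem~\ref{dimensionesfactores-par}.

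The plan is to mirror the computation carried out for the action $\sigma_2$ in Section~\ref{section-odd}: start from the generic group algebra decomposition, apply Theorem~\ref{dimensiones} together with the generating vector $(s,s,as,a^3s,a^2)$ provided by Proposition~\ref{vectoresgeneradores_2}, and compute the dimension contributed by each rational irreducible representation. The only genuinely new ingredient relative to the odd case is a careful reading of the character table of $D_{2n}$ with $n$ even, since some character values on powers of $a$ depend on the parity of $n$.

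Recall from Section~\ref{rationalrep} that $D_{2n}$ has four rational irreducible representations $\chi_0,\chi_1,\chi_2,\chi_3$ of degree one, all with Schur index $1$, and a two-dimensional representation $\cW_d$ for each $d\in\Omega(2n)$ with $k_d=[K_{\psi_d}:\QQ]=\varphi(2n/d)/2$. Since $\gamma=0$ one has $J(X/D_{2n})=0$, so the factor $E_0$ is trivial and it suffices to compute the dimensions of the remaining pieces. The central computational step is to evaluate $\dim\text{Fix}_H V$ for each complex irreducible $V$ and each cyclic subgroup $H\in\{\langle s\rangle,\langle as\rangle,\langle a^3 s\rangle,\langle a^2\rangle\}$ occurring in the generating vector. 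For the $\chi_i$ the value of $\dim\text{Fix}_H \chi_i$ is $0$ or $1$ according to whether the character takes the value $-1$ or $1$ on a generator of $H$; with $n$ even one has $\chi_2(a^2)=\chi_3(a^2)=1$, so a direct inspection shows that $\chi_1$ is killed only on $\langle a^2\rangle$, $\chi_2$ is fixed by $\langle s\rangle$ and $\langle a^2\rangle$, and $\chi_3$ is fixed by $\langle as\rangle$, $\langle a^3 s\rangle$ and $\langle a^2\rangle$. For $\psi_d$ the matrices $\psi_d(s)$, $\psi_d(as)$, $\psi_d(a^3 s)$ are involutions of trace $0$, hence each contributes a one-dimensional $+1$-eigenspace; the key point is that $\psi_d(a^2)=\mathrm{diag}(\omega_{2n}^{2d},\omega_{2n}^{-2d})$, and $\omega_{2n}^{2d}=1$ would force $n\mid d$, contradicting $0<d<n$ from $d\in\Omega(2n)$, so $\dim\text{Fix}_{\langle a^2\rangle}\psi_d=0$.

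Plugging these values into the dimension formula of Theorem~\ref{dimensiones} with $\gamma=0$ and $r=5$ then yields $\dim E_1=-1+\tfrac{1}{2}(4)=1$, $\dim E_2=\dim E_3=-1+\tfrac{1}{2}(2)=0$, and
$$\dim B_d=\frac{\varphi(2n/d)}{2}\Bigl(-2+\tfrac{1}{2}(1+1+1+1+2)\Bigr)=\frac{1}{2}\varphi\!\left(\frac{2n}{d}\right).$$
As a sanity check, the map $d\mapsto 2n/d$ is a bijection between $\Omega(2n)$ and the set of divisors of $2n$ strictly greater than $2$, so the identity $\sum_{m\mid 2n}\varphi(m)=2n$ gives $\dim E_1+2\sum_d\dim B_d=1+(2n-\varphi(1)-\varphi(2))=2n-1=g$, confirming that the decomposition accounts for the full genus. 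The main difficulty is not really conceptual but bookkeeping: one must track the parity dependence of the character values on powers of $a$ carefully to correctly identify that $\chi_1$, rather than $\chi_2$ or $\chi_3$, is the character producing the elliptic factor in the even case.
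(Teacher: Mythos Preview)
Your proposal is correct and follows exactly the approach the paper intends: the paper explicitly omits the proof, stating it is analogous to the $\sigma_2$ computation in Section~\ref{section-odd}, and you carry out precisely that computation (Theorem~\ref{dimensiones} applied to the generating vector $(s,s,as,a^3s,a^2)$ together with the fixed-space data from the character table). The only quibble is the phrasing ``$\chi_1$ is killed only on $\langle a^2\rangle$,'' which is a bit ambiguous---you mean $\text{Fix}_{\langle a^2\rangle}\chi_1=1$ while the four reflection subgroups give $\text{Fix}=0$---but the ensuing arithmetic $\dim E_1=-1+\tfrac{1}{2}(4)=1$ makes the intended meaning clear.
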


\vspace{0.2cm}
As we show in Theorem \ref{jacH}, using Lemma \ref{metodo}, we have a criterium to write some products of the factors as Jacobian of intermediate converings. 

\begin{thm}\label{jacH-par}
Let $n$ be an even number.
If $X$ is a Riemann surface having the unique action of the group $D_{2n}$ with signature $(0;2,2,2,2,n)$. Then $JX$ decomposes as

$$JX\sim J(X/\langle a^2\rangle)\times J(X/\langle a^n, s\rangle)^2\times P(X_{\langle s\rangle}/X_{\langle a^n, s\rangle})^2.$$

\end{thm}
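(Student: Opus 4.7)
The plan is to imitate the proof of Theorem~\ref{jacH}, replacing the odd-$n$ tables of fixed-space dimensions with their even-$n$ analogs and then invoking Lemma~\ref{metodo}, formula~\eqref{carocca-rodriguez}, and \cite[Corollary 5.6]{cr} to identify each factor geometrically.

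First I would compute $\dim \text{Fix}_H V$ for the subgroups $H = \langle a^2\rangle,\ \langle s\rangle,\ \langle a^n, s\rangle$ and each complex irreducible representation $V \in \{\chi_0,\chi_1,\chi_2,\chi_3\} \cup \{\psi_d : d \in \Omega(2n)\}$. The only input sensitive to the parity of $n$ is the eigenvalue of $\psi_d$ at $a^n$: with $\omega = e^{\pi i/n}$ one has $\psi_d(a^n) = (-1)^d I$, so $\dim \text{Fix}_{\langle a^n\rangle}\psi_d$ equals $2$ for $d$ even and $0$ for $d$ odd. Since $\psi_d(s)$ always has eigenvalues $\pm 1$, this forces $\dim \text{Fix}_{\langle a^n, s\rangle}\psi_d$ to be $1$ for $d$ even and $0$ for $d$ odd. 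The degree-one characters are handled directly from Table~1 using $\chi_i(a^n) = 1$ when $n$ is even.

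Next I would identify each factor in turn. By Theorem~\ref{dimensionesfactores-par} the only nontrivial rational irreducibles with positive-dimensional factor are $\chi_1$ (giving $E_1$) and the $\cW_d$ for $d \in \Omega(2n)$. For $H = \langle a^2\rangle$, note $\chi_1(a^2) = 1$ while $\psi_d(a^2)$ has eigenvalues $\omega^{\pm 2d} \neq 1$ (as $d < n$), so $\dim \text{Fix}_{\langle a^2\rangle}\chi_1 = 1$ and $\dim \text{Fix}_{\langle a^2\rangle}\psi_d = 0$. Lemma~\ref{metodo}(ii) then yields $J(X/\langle a^2\rangle) \sim E_1$. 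For $H = \langle a^n, s\rangle$, the table gives $\dim \text{Fix}_H\chi_1 = 0$ and $\dim \text{Fix}_H \psi_d = 1$ (resp.\ $0$) for $d$ even (resp.\ odd); formula~\eqref{carocca-rodriguez} then produces $J(X/\langle a^n, s\rangle) \sim \prod_{d \in \Omega_{\text{even}}} B_d$.

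Finally, for the Prym factor I would compute the virtual character $\text{Ind}_{\langle s\rangle}^G 1 - \text{Ind}_{\langle a^n, s\rangle}^G 1$ using Frobenius reciprocity: among nontrivial representations the contributions from $\chi_1$ and from $\psi_d$ with $d$ even cancel, while each $\psi_d$ with $d$ odd survives with coefficient $1$, so the virtual character equals $\sum_{d \in \Omega_{\text{odd}}} \cW_d$. By \cite[Corollary 5.6]{cr} this gives $P(X_{\langle s\rangle}/X_{\langle a^n, s\rangle}) \sim \prod_{d \in \Omega_{\text{odd}}} B_d$. Squaring to match the multiplicities in the group algebra decomposition then assembles the claimed isogeny. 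The main obstacle, modest here, is the careful bookkeeping of the parity-dependent fixed-space dimensions for $\psi_d$ at $a^n$ and the intersections with $\text{Fix}_{\langle s\rangle}$; everything else transcribes directly from the proof of Theorem~\ref{jacH}(ii).
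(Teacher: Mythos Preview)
Your proposal is correct and follows precisely the route the paper indicates: the paper omits the proof entirely, stating only that it is analogous to the $\sigma_2$ case of Theorem~\ref{jacH}, and your argument carries out exactly that analogy with the appropriate parity adjustments. The one cosmetic point is that in the Prym step the full virtual character $\text{Ind}_{\langle s\rangle}^G 1 - \text{Ind}_{\langle a^n,s\rangle}^G 1$ also involves $\chi_0,\chi_2,\chi_3$ (each cancelling to zero when $n$ is even, since then $\chi_2(a^n)=\chi_3(a^n)=1$), but since you already observed via Theorem~\ref{dimensionesfactores-par} that the corresponding factors have dimension zero, this does not affect the conclusion.
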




\section{Shimura domains and Jacobians.}\label{S:shimura}

In this section we compute the dimension of the Shimura domains associated to the Jacobian varieties corresponding to the curves with action of $G=D_{2n}$ with signature $(0;2,2,2,2,n)$. We follow the ideas in \cite[Section 3]{wolfart}. Let $X$ be a Riemann surface of genus $g$ with the action of a finite group $G$ defined by a generating vector. In our case $g=2n-1$, $G=D_{2n}$, and we have two non-topologically equivalent actions defined by $\sigma_1$ and $\sigma_2$ in the case $n$ is odd, and one action $\sigma_2$ (up to topological equivalence) if $n$ is even. We show that the Shimura domains have different dimension depending on the action. This is, for $n$ odd the strata $\mathcal{M}(g,G,\theta_1)$ goes via the Jacobi map to a submanifold $\mathcal{S}_1$ of the moduli space $\mathcal{A}_g$ of principally polarized abelian varieties of dimension $N_{n,\sigma_1}$, and for all $n$ the strata $\mathcal{M}(g,G,\theta_2)$ goes to a submanifold $\mathcal{S}_2$ of dimension $N_{n,\sigma_2}$. For $n$ even, the situation is as in $\mathcal{M}(g,G,\theta_2)$ for $n$ odd.

Let us develop some background first. Fix a symplectic basis of the homology of $X$, this determines a fixed Riemann matrix $Z\in \mathbb{H}_g$ in the Siegel upper half space of complex $g\times g$ symmetric matrices with positive definite imaginary part, of the Jacobian $JX$ of $X$. This choice also determines a symplectic representation of $L:=\End_0 (JX)=\End (JX)\otimes_{\mathbb{Z}} \mathbb{Q}$. Every automorphism of $X$ induces a unique automorphism of the corresponding Jacobian, hence $G$ can be considered as a subgroup of the  polarization preserving automorphism of $JX$. Therefore $G$ is isomorphic to the subgroup

$$\Sigma:=\{\gamma \in \Sp_{2g}(\mathbb{Z}) : \gamma*Z=Z\},$$

where the action of $\gamma =\left(\begin{array}{c c} A & B\\ C & D\\ \end{array}\right)$ is 

$$\left(\begin{array}{c c} A & B\\ C & D\\ \end{array}\right)*Z=(A+ZC)^{-1}(B+ZD).$$

A change of basis induces a different (but equivalent) choice of $Z$, a conjugated subgroup $\Sigma$ and a different rational representation of $L$. Fixing $Z$ it is obtain a submanifold 

$$S_G:=\{W\in \mathbb{H}_g: \gamma*W=W \text{ for all } \gamma \in \Sigma\}.$$

$S_G$ contains a complex submanifold $\mathbb{H}(L)$ of $\mathbb{H}_g$ parametrizing a {\em Shimura family} $\mathcal{S}$ of principally polarized abelian varieties containing $L$ in their endomorphism algebras. $\mathbb{H}(L)$ is called \cite[Section 3]{wolfart} the {\em Shimura domain} for $Z$.

According to \cite[Lemma 3.8]{paola}, the dimension of $\mathbb{H}(L)$ corresponds to 

$$N=\dim\; (S^2(H^{1,0}(S,\mathbb{C})))^G,$$ 
where $H^{1,0}(S,\mathbb{C})$ is the complex vector space of holomorphic forms. The action of $G$ on $X$ induces a (analytic) representation $\rho_a$ in this vector space. Let $S^2(H^{1,0}(S,\mathbb{C}))$ be the representation of $G$ on the symmetric power of $H^{1,0}(S,\mathbb{C})$, and $(S^2(H^{1,0}(S,\mathbb{C})))^G$ the subspace fixed by $G$. The dimension of this last subspace can be computed in terms of the character of $\rho_a$ because, by Frobenius reciprocity, it corresponds to the character product of the trivial representation of $G$ with $S^2(H^{1,0}(S,\mathbb{C}))$.

Using Serre's formula \cite[Section 1.2]{serre}

\begin{equation}\label{eq:N}
 N=\frac{1}{2|G|}\sum_{g\in G}\left(\chi_{\rho}(g)^2+\chi_{\rho}(g^2)\right),
\end{equation}

where $\chi_\rho$ is the character of $\rho_a$. 
Let us write
\[
 \chi_\rho=\sum_{\chi\in\text{Irr}(G)}\mu_\chi\chi.
\]
In general, the coefficients $\mu_\chi$ can be computed using a classical result due to Chevalley and Weil \cite{chevw}. In our case, since we know the isotypical decomposition of $JX$, we know the irreducible representations in $\rho_a$.

Let us compute detailed these dimensions for the case $n$ a prime number. The general case is similar, but the technical details are harder to be written.

\begin{prop}\label{P:Ns}
Let $p$ be a prime number and $D_{2p}=\langle a,b | a^{2p}=s^2=(as)^2=1\rangle$. Let $\sigma_1$ and $\sigma_2$ be two generating vectors representing the two non-topologically equivalent actions of $D_{2p}$ with signature $(0;2,2,2,2,p)$. For $i=1,2$, denote by $N_{p,\sigma_i}$ the dimension of the Shimura domain corresponding to the action determined by $\sigma_i$. Then

$$N_{p,\sigma_1}=\frac{3p-1}{2}\;\; \text{ and } \; N_{p,\sigma_2}=p.$$

\end{prop}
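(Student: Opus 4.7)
The plan is to apply Serre's formula \eqref{eq:N} separately to each of the two actions. This reduces the computation to identifying the decomposition of the analytic representation $\rho_a$ of $G=D_{2p}$ on $H^{1,0}(X,\CC)$ into complex irreducibles, after which the sum over $G$ collapses via character orthogonality and the Frobenius--Schur indicators. My path is: first extract the multiplicities $\mu_V$ of each complex irreducible $V$ in $\rho_a$ from Corollary \ref{dimensionesfactores_p}; second exploit the fact that every irreducible of $D_{2p}$ is of real type (so $\nu_V = +1$); and third evaluate the resulting simplified expression.

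For the multiplicities, I would use that each complex character of $D_{2p}$ is real-valued, hence $\overline{\rho_a}\cong\rho_a$ and $H^1(X,\CC)\cong\rho_a\oplus\rho_a$ as complex $G$-modules. Consequently the multiplicity of $V$ in the rational module $H^1(X,\CC)$ equals $2\mu_V$, which is constant on each Galois orbit of complex irreducibles (because $H^1(X,\QQ)$ is defined over $\QQ$). Moreover the $\cW_i$-isotypical component of $JX$ has complex dimension $\sum_{V\in \mathrm{orbit}(\cW_i)}\mu_V\dim V = n_i \dim B_i$, by the definition of the group algebra decomposition. Combining these observations with Corollary \ref{dimensionesfactores_p} I would obtain, for $\sigma_1$: $\mu_{\chi_2}=1$, $\mu_{\psi_d}=2$ for each odd $d\in\{1,3,\dots,p-2\}$, and all other $\mu_V=0$; and for $\sigma_2$: $\mu_{\chi_1}=1$, $\mu_{\psi_d}=1$ for every $d\in\{1,2,\dots,p-1\}$, and all other $\mu_V=0$.

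With the multiplicities in hand, Serre's formula should collapse as follows: since $\nu_V=1$ for every $V$, character orthogonality yields $\frac{1}{|G|}\sum_g \chi_\rho(g)^2 = \sum_V \mu_V^2$ and $\frac{1}{|G|}\sum_g \chi_\rho(g^2) = \sum_V \mu_V$, hence $N = \tfrac{1}{2}(\sum_V \mu_V^2 + \sum_V \mu_V)$. Substituting: for $\sigma_1$ one finds $\sum \mu_V^2 = 1+\tfrac{p-1}{2}\cdot 4 = 2p-1$ and $\sum \mu_V = 1+\tfrac{p-1}{2}\cdot 2 = p$, so $N_{p,\sigma_1}=(3p-1)/2$; for $\sigma_2$, $\sum \mu_V^2 = \sum \mu_V = 1+(p-1)=p$, so $N_{p,\sigma_2}=p$. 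The main technical subtlety is the equality of $\mu_V$ along each Galois orbit, which I would establish via the reality of the dihedral characters as above; alternatively, the Chevalley--Weil formula applied to each generating vector produces the same expression for every conjugate $V^\sigma$, since the eigenvalue data it depends on are Galois-symmetric within an orbit of self-conjugate irreducibles.
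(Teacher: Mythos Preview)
Your argument is correct and arrives at the same conclusions, but it takes a cleaner route than the paper's own proof. Both proofs start from Serre's formula \eqref{eq:N} and from the decomposition of the analytic representation implied by Corollary~\ref{dimensionesfactores_p} (equivalently, the paper's explicit identifications $\rho_{\sigma_1}\cong\chi_2\oplus 2\cW_1$ and $\rho_{\sigma_2}\cong\chi_1\oplus\cW_1\oplus\cW_2$). From that point the paper proceeds by brute force: it writes out the character table for $\chi_1,\chi_2,\cW_1,\cW_2$ on each conjugacy class, evaluates $(\chi_\rho(g))^2+\chi_\rho(g^2)$ class by class, and sums. You instead observe that every complex irreducible of $D_{2p}$ has real character and Frobenius--Schur indicator $+1$; then character orthogonality collapses $\frac{1}{|G|}\sum_g\chi_\rho(g)^2$ to $\sum_V\mu_V^2$ and the Frobenius--Schur relation collapses $\frac{1}{|G|}\sum_g\chi_\rho(g^2)$ to $\sum_V\mu_V$, yielding the closed formula $N=\tfrac12\bigl(\sum_V\mu_V^2+\sum_V\mu_V\bigr)$ with no elementwise computation at all. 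What the paper's approach buys is directness with no extra theory; what yours buys is a formula that works uniformly and immediately generalizes (indeed it makes the subsequent theorem for arbitrary $n$ a one-line check rather than a repeat of the tabular computation). Your justification that the multiplicities $\mu_V$ are constant along each Galois orbit, via the self-conjugacy of $\rho_a$ and the rationality of $H^1(X,\QQ)$, is a nice touch that the paper leaves implicit when it asserts the analytic representation decomposes into rational pieces.
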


\begin{proof}
Let us denote by $\rho_{\sigma_i}$ the analytic representation corresponding to the action determined by $\sigma_i$. From Theorem \ref{dimensionesfactores} we have that

$$\rho_{\sigma_1}\equiv \chi_2\oplus 2W_1 \text{ and } \rho_{\sigma_2}\equiv \chi_1\oplus W_1\oplus W_2,$$

The character table for these representations is given in Table 7.

\vskip12pt

\begin{center}
\begin{tabular}{c|c|c|c|c|c|c|c|c}
Rep. & 1& $a$ &$a^2$ & $\cdots$ & $a^{p-1}$ & $a^p$ & $s$ & $as$\\ \hline
$\sharp$ & $1$ & 2& 2& $\cdots$ & $2$ & $1$ & $p$ & $p$ \\ \hline \hline
$\chi_1$ & 1& 1& 1& $\cdots$ & $1$ & $1$ & $-1$ & $-1$ \\ \hline
$\chi_2$ & 1& -1& 1& $\cdots$ & $1$ & $-1$ & $1$ & $-1$ \\ \hline
$W_1$ & $p-1$& 1& -1& $\cdots$ & $1$ & $-(p-1)$ & $0$ & $0$ \\ \hline
$W_2$ & $p-1$& -1& -1& $\cdots$ & $-1$ & $(p-1)$ & $0$ & $0$ \\ \hline

\end{tabular}

\vskip12pt

\textsc{Table 7.} Characters to compute the analytic character, prime case.
\end{center}

Using Equation \eqref{eq:N} to compute $N$ in our context, we have

$$N_{p,\sigma_1}=\frac{1}{8p}\sum_{g\in D_{2p}}\left((\chi_2(g)+2\chi_{W_1}(g))^2+(\chi_2(g^2)+2\chi_{W_1}(g^2))\right).$$
The result follows from replacing the values from Table 7 in the term $\left((\chi_2(g)+2\chi_{W_1}(g))^2+(\chi_2(g^2)+2\chi_{W_1}(g^2))\right)$ for each element $g\in G$:

\begin{enumerate}
\item $g=1$, $((1+2(p-1))^2+(1+2(p-1)),$
\item $g=a^{2j-1}$, $(-1+2)^2+(1-2)=0$, 
\item $g=a^{2j}$, $(1-2)^2+(1-2)=0$, 
\item $g=a^p$, $(-1-2(p-1))^2+(1+2(p-1)),$
\item $g=a^{2j-1}s$, $(1)^2+(1+2(p-1))$, 
\item $g=a^{2j}s$, $(-1)^2+(1+2(p-1))$. 
\end{enumerate}

For the other action we have

$$N_{p,\sigma_2}=\frac{1}{8p}\sum_{g\in D_{2p}}\left((\chi_2(g)+\chi_{W_1}(g)+\chi_{W_2}(g))^2+(\chi_2(g^2)+\chi_{W_1}(g^2)+\chi_{W_2}(g^2))\right).$$
The result follows in an analogous way as before. Here the terms are

\begin{enumerate}
\item $g=1$, $((1+(p-1)+(p-1))^2+(1+(p-1)+(p-1)),$
\item $g=a^{2j-1}$, $(1+1-1)^2+(1-1-1)=0$, 
\item $g=a^{2j}$, $(1-1-1)^2+(1-1-1)=0$, 
\item $g=a^p$, $(1-(p-1)+(p-1))^2+(1+2(p-1)),$
\item $g=a^{2j-1}s$, $p((-1)^2+(1+2(p-1)))$, 
\item $g=a^{2j}s$, $p((-1)^2+(1+2(p-1)))$. 
\end{enumerate}

\end{proof}

\begin{rem}
In \cite[Thm. 3.9]{paola} there is a criterion allowing to prove whether a family of Jacobians contains infinitely many elements with complex multiplication. It is as follows, let  $G$ be a finite group acting on a curve of genus $g$ with signature $m=[0;m_1,\dots,m_r]$, and generating vector $\sigma=(c_1,\dots, c_r)$. For a fixed pair $(m,\sigma)$, by moving the branch points of the covering in $\mathbb{P}^1$ one obtains an $(r-\!3)$-dimensional family of such coverings, and a corresponding family of Jacobians $\mathcal{J}(G,m,\sigma)$ of the same dimension. Denote by $Z(G,m,\sigma)$ the closure in the moduli space of principally polarized abelian varieties $\mathcal{A}_g$ of $\mathcal{J}(G,m,\sigma)$, this is the image in $\mathcal{A}_g$ of the connected component of $\mathbb{H}(L)$ containing $\mathcal{J}(G,m,\sigma)$. Let $N=\dim (S^2(H^{1,0}(S,\mathbb{C})))^G$ as before. If $N$ is $(r-\!3)$, then $Z(G,m,\sigma)$ contains a dense set of Jacobians of CM-type. Notice that our family has the action with $5$ branch points, hence it is of dimension $2$, and for $p=2$ we obtain $N_2=2$, therefore this family gives infinitely many Jacobian varieties of CM type in dimension $g=3$. This is the same family $(32)$ in \cite[Table 2]{paola}.

\end{rem}

For the general case we have the following result.

\begin{thm}
With the notation in Proposition \ref{P:Ns}. Let $n$ be an integer. If $n$ is odd, let $\sigma_1$ and $\sigma_2$ be the two non-topologically equivalent actions of $D_{2n}$ with signature $(0;2,2,2,2,n)$. For $i=1,2$, denote by $N_{n,\sigma_i}$ the dimension of the Shimura domain corresponding to the action determined by $\sigma_i$. Then

$$N_{n,\sigma_1}=\frac{3n-1}{2}\;\; \text{ and } \; N_{n,\sigma_2}=n.$$

If $n$ is even, let $\sigma_2$ be the unique action of $D_{2n}$ with signature \linebreak $(0;2,2,2,2,n)$, and denote by $N_{n,\sigma_2}$ the dimension of the corresponding Shimura domain. Then $N_{n,\sigma_2}=n$. 

\end{thm}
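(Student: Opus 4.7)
The plan is to follow the same strategy as in Proposition~\ref{P:Ns}: identify the analytic representation $\rho_a$ of $G=D_{2n}$ on $H^{1,0}(X,\CC)$ for each action and then apply Serre's formula~\eqref{eq:N}, but with a simplification specific to the dihedral setting. The starting data come from Theorems~\ref{dimensionesfactores} and \ref{dimensionesfactores-par}, which give the $\QQ$-isotypical decomposition of $JX$ in terms of the $\dim B_d$. To refine these rational data to the exact complex multiplicities $a_V$ of each complex irreducible $V$ in $\rho_a$, I would use two facts: every complex irreducible of $D_{2n}$ is realizable over $\RR$ (Frobenius--Schur indicator $+1$; the $\psi_d$ are the standard $2$-dimensional rotation/reflection representations), and $H^1(X,\CC)=H^1(X,\QQ)\otimes\CC$ is defined over $\QQ$. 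The first forces $\overline V\cong V$, so $\mathrm{mult}_V(\rho_a)=\tfrac12\mathrm{mult}_V(H^1(X,\CC))$; the second makes Galois-conjugate complex irreducibles occur with the same multiplicity in $H^1(X,\CC)$, and hence in $\rho_a$. Combining these with the formulas for $\dim B_d$ yields
\[
\rho_a^{\sigma_1}=\chi_2\oplus 2\!\!\!\bigoplus_{\substack{1\le j\le n-1\\ j\text{ odd}}}\!\!\!\psi_j,\qquad
\rho_a^{\sigma_2}=\chi_1\oplus\bigoplus_{j=1}^{n-1}\psi_j,
\]
where the last expression is used verbatim in both the $n$ odd and $n$ even cases.

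For the second step, rather than grinding Serre's formula over each conjugacy class of $D_{2n}$ (which introduces awkward sums of the form $\sum_{j\text{ odd}}\cos(\pi jr/n)$ on the rotation classes), I would observe that orthogonality of every $D_{2n}$-irreducible collapses the symmetric-square computation. Writing $\rho_a=\bigoplus_V a_V V$ and using $S^2(V^{\oplus a})\simeq\binom{a+1}{2}S^2V\oplus\binom{a}{2}\Lambda^2V$, together with $\dim(S^2V)^G=1$, $\dim(\Lambda^2V)^G=0$ for orthogonal $V$, and $\dim(V\otimes W)^G=0$ for distinct self-dual $V\ne W$, one obtains the compact identity
\[
N=\dim\bigl(S^2\rho_a\bigr)^G=\sum_V\binom{a_V+1}{2}.
\]

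The final step is pure arithmetic. For $n$ odd the odd values of $j$ in $\{1,\dots,n-1\}$ number $(n-1)/2$, giving
\[
N_{n,\sigma_1}=\binom{2}{2}+\frac{n-1}{2}\binom{3}{2}=1+\frac{3(n-1)}{2}=\frac{3n-1}{2},
\]
while for either parity of $n$
\[
N_{n,\sigma_2}=\binom{2}{2}+(n-1)\binom{2}{2}=1+(n-1)=n,
\]
which in particular disposes of the $n$ even case uniformly. The step I expect to require the most care is Step~1---promoting the rational data of Theorems~\ref{dimensionesfactores} and \ref{dimensionesfactores-par} to the exact complex multiplicities $a_V$. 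Once the two ingredients (orthogonality and $\QQ$-rationality of $H^1$) are combined as above, Steps~2 and~3 become essentially formal.
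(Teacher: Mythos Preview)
Your argument is correct, and it takes a genuinely different route from the paper. Both approaches begin with the same identification of the analytic representation (the paper packages the degree-$2$ summands as $\widetilde{\cW_1}=\bigoplus_{j\text{ odd}}\psi_j$ and $\widetilde{\cW_2}=\bigoplus_{j\text{ even}}\psi_j$, builds a character table for these---Table~9---and then evaluates Serre's formula~\eqref{eq:N} conjugacy class by conjugacy class, exactly as in the prime case). Where you diverge is in Step~2: instead of carrying out the element-by-element sum, you exploit the fact that every complex irreducible of $D_{2n}$ has Frobenius--Schur indicator $+1$, which forces $\dim(S^2V)^G=1$, $\dim(\Lambda^2V)^G=0$, and $\dim(V\otimes W)^G=0$ for $V\not\cong W$, collapsing the whole computation to the closed form $N=\sum_V\binom{a_V+1}{2}$. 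This is cleaner and more conceptual---it makes transparent why the answer depends only on the multiset of multiplicities $\{a_V\}$ and not on any delicate character sums, and it handles the parity of $n$ uniformly. The paper's approach, by contrast, is more hands-on and keeps the computation parallel to Proposition~\ref{P:Ns}; its advantage is that it requires no representation-theoretic input beyond the character table, whereas your shortcut relies on knowing the Frobenius--Schur indicators. Your justification in Step~1 (self-conjugacy plus $\QQ$-rationality of $H^1$ to pass from the $\dim B_d$ to the individual $a_{\psi_j}$) is also more explicit than what the paper writes; the paper simply reads the analytic representation off Theorems~\ref{dimensionesfactores} and \ref{dimensionesfactores-par} without spelling out why the multiplicities are constant across each Galois orbit.
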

\begin{proof}
As said, the proof follows the same strategy as in Proposition \ref{P:Ns}, but one needs to describe the character of the analytic representation using the decompositions for each case. Let us first review the case $n$ an odd number. In Theorem \ref{dimensionesfactores} is described the decomposition of the corresponding Jacobian variety, hence the analytic character corresponds to the following for each action. Let us denote them as $\rho_{\sigma_1}$ and $\rho_{\sigma_2}$ respectively.

Since $n$ is odd, the set $\Omega(2n)$ decomposes as $\Omega_{even}\cup\Omega_{odd}$, as in \eqref{eq:omega_odd}. Therefore for each action $\sigma_i$ we have the following decomposition of the analytic representation

\begin{center}
\begin{tabular}{l|c}
Action& $\rho_{\sigma_i}$\\ \hline
$\sigma_1$ & $\displaystyle{\chi_2 \bigoplus_{j\in \Omega_{odd}} 2\psi_j}$\\ \hline
$\sigma_2$ & $\displaystyle{\chi_1 \bigoplus_{j\in \Omega_{even}} \psi_j \bigoplus_{j\in \Omega_{odd}} \psi_j}$\\ \hline
\end{tabular}

\vskip12pt

\textsc{Table 8.} Decomposition of the analytic representation.
\end{center}

Denote by $\widetilde{\mathcal{W}_1}=\bigoplus_{j\in \Omega_{odd}} \psi_j$ and $\widetilde{\mathcal{W}_2}=\bigoplus_{j\in \Omega_{even}} \psi_j$. The character table is given in Table 9. 

For $n$ even, we have just one action $\sigma_2$ up to topological equivalence. The result follows from computing the dimensions $N_{n,\sigma_1}$ and $N_{n,\sigma_2}$ using Equation \eqref{eq:N}.

\begin{center}
\begin{tabular}{c|c|c|c|c|c|c|c|c}
Rep. & 1& $a$ &$a^2$ & $\cdots$ & $a^{n-1}$ & $a^n$ & $s$ & $as$\\ \hline
$\sharp$ & $1$ & 2& 2& $\cdots$ & $2$ & $1$ & $n$ & $n$ \\ \hline \hline
$\chi_1$ & 1& 1& 1& $\cdots$ & $1$ & $1$ & $-1$ & $-1$ \\ \hline
$\chi_2$ & 1& -1& $(-1)^2$ & $\cdots$ & $(-1)^{n-1}$ & $(-1)^n$ & $1$ & $-1$ \\ \hline
$\widetilde{\mathcal{W}_1}$ & $n-1$& 1& -1& $\cdots$ & $1$ & $-(n-1)$ & $0$ & $0$ \\ \hline
$\widetilde{\mathcal{W}_2}$ & $n-1$& -1& -1& $\cdots$ & $-1$ & $(n-1)$ & $0$ & $0$ \\ \hline

\end{tabular}

\vskip12pt

\textsc{Table 9.} Characters to compute the analytic character.

\end{center}

\end{proof}

\begin{rem}
In\cite{lrro} a family $\mathcal{F}$ of principally polarized abelian varieties with action of $D_{2p}$ is studied, for $p$ an odd prime. This family is different from ours because of several reasons; first of all, $\mathcal{F}$ contains no Jacobians \cite[Proposition 6.9]{lrro}, the elements in $\mathcal{F}$ have dimension $2p$, and the generic element decomposes as $E_0\times  E_3\times B_1^2 \times B_2^2$, where the subvarieties are associated to the rational representations of $D_{2p}$ as in Corollary \ref{dimensionesfactores_p} and $E_i$'s are elliptic curves and $B_i$'s are of dimension $\frac{p-1}{2}$. Finally the Shimura domain for $\mathcal{F}$ has dimension $p+1-$ and ours are of dimension $p$ or $\frac{3p-1}{2}$.
\end{rem}


\end{document}